\def\thm@space@setup{%
 \thm@preskip=\parskip \thm@postskip=0pt
}
\def\th@remark{%
  \thm@headfont{\itshape}%
  \normalfont 
  \thm@preskip\parskip \thm@postskip=0pt
}
\renewcommand{\PrintDOI}[1]{%
  \href{http://dx.doi.org/#1}{{\tt DOI:#1}}%
}
\renewcommand{\eprint}[1]{#1}
\numberwithin{equation}{section}
\newtheorem{Theorem}{Theorem}[section]
\newtheorem{Def}[Theorem]{Definition}
\newtheorem{Lem}[Theorem]{Lemma}
\newtheorem{Prop}[Theorem]{Proposition}
\newtheorem{Rem}[Theorem]{Remark}
\newtheorem{Exa}[Theorem]{Example}
\newcommand\bp{\begin{proof}}
\newcommand\ep{\end{proof}}
\mathchardef\mhyph="2D
\DeclareMathOperator{\Ad}{\mathrm{Ad}}
\DeclareMathOperator{\End}{\mathrm{End}}
\DeclareMathOperator{\id}{\mathrm{id}}
\DeclareMathOperator{\twist}{\mathrm{tw}}
\DeclareMathOperator{\Ker}{\mathrm{Ker}}
\DeclareMathOperator{\twistprod}{\underset{\twist}{\times}}
\newcommand{\Z}{\mathbb{Z}}
\newcommand{\opp}{\mathrm{op}}
\newcommand{\triv}{\mathrm{triv}}
\begin{document}

\title{Actions of skew braces and set-theoretic solutions of the reflection equation}

\author{K. De Commer}
\address{Vakgroep wiskunde, Vrije Universiteit Brussel (VUB), B-1050 Brussels, Belgium}
\email{kenny.de.commer@vub.be}
\date{}

\maketitle

\begin{abstract}
A skew brace, as introduced by L. Guarnieri and L. Vendramin, is a set with two group structures interacting in a particular way. When one of the group structures is abelian, one gets back the notion of brace as introduced by W. Rump. Skew braces can be used to construct solutions of the quantum Yang-Baxter equation.  In this article, we introduce a notion of action of a skew brace, and show how it leads to solutions of the closely associated reflection equation. 
\end{abstract}

\section*{Introduction}

The \emph{quantum Yang-Baxter equation} first appeared in the theory of exactly solvable quantum integrable systems \cite{Yan67, Bax72}, but has by now permeated many different areas of mathematics such as Hopf algebras, knot theory and tensor categories. In particular, within the theory of quantum groups the matrix-valued solutions of the quantum Yang-Baxter equation, known as \emph{$R$-matrices}, play a crucial r\^{o}le as a `quasi-commutativity' condition \cite{Dri87}. 

In \cite[Section 9]{Dri92}, V. Drinfeld asked whether there is an interesting theory of \emph{set-theoretic solutions} to the quantum Yang-Baxter equation. This  has recently received a lot of attention \cite{G-IVdB98,ESS99,LYZ00,G-I04,G-IM07,G-IM08,CJO10,G-I12,G-IC12,Deh15}, with a strong impetus coming from the theory of \emph{braces} introduced by W. Rump \cite{Rump07}, see also \cite{CJdR10,Rump14,CJO14},  and more recently \emph{skew braces} \cite{GV17}, see also \cite{SV18,JVA18}.

In a different direction, there has recently also been considerable interest in the \emph{reflection equation}, which first appeared in the study of quantum scattering on the half-line by I.~Cherednik \cite{Che84}. Just as the theory of quantum groups is governed by a universal $R$-matrix satisfying the quantum Yang-Baxter equation, the theory of quantum group actions is closely connected with a universal $K$-matrix satisfying the reflection equation \cite{DKM03,Kol08,KS09,KB15,RV16,Kolb17}. On the categorical level, this corresponds to braidings on module categories over braided tensor categories \cite{tD98,tDH-O98, Bro13,B-ZBJ16}. 

In this article, we make a connection between these two areas by studying solutions to the set-theoretic reflection equation coming from actions of skew braces. As far as we know, the set-theoretic reflection equation only appeared in the physics literature  so far \cite{CZ12,CCZ13}. 

The article is structured as follows. In the \emph{first section}, we recall the definition of skew (left) brace $(A,\circ,\cdot)$ \cite{GV17}, and propose a definition of \emph{skew brace action} as an action of $(A,\circ)$ on a set $X$ together with an equivariant map from $X$ to a universal $(A,\circ)$-space $X_A$. In the \emph{second section}, we consider some examples. In the \emph{third section}, we introduce the \emph{set-theoretic reflection equation}. In the \emph{fourth section}, we recall the notion of a \emph{braiding operator} for a group \cite{LYZ00}. In the \emph{fifth section} we study \emph{braided actions} for groups with a braiding operator, and show how they lead to solutions of the reflection equation. In the \emph{sixth section}, we show how braided actions can be amplified, and subsequently characterized in terms of the amplified actions. In the \emph{seventh section}, we prove the equivalence between skew brace actions and braided actions of groups with a braiding operator. 

\emph{Acknowledgements}: This work was supported by the FWO grant G.0251.15N. The author would like to thank A. Van Antwerpen for bringing \cite{Rump09} to his attention.

\section{Skew braces and actions of skew braces}

\begin{Def}[\cite{GV17}]
A \emph{skew (left) brace}  is a triple $(A,\circ,\cdot)$ such that $\circ$ and $\cdot$ are group structures on the same set $A$, and such that we have the following distributivity law: 
\[
a\circ (b\cdot c) = (a\circ b)\cdot a^{-1}\cdot (a\circ c),
\]
where $a\mapsto a^{-1}$ is the inverse with respect to $\cdot$. 
\end{Def}

Note that the unit $e_A$ of $(A,\circ)$ is then automatically also the unit of $(A,\cdot)$. On the other hand, the inverse $a^{-1}$ of $a$ for $\cdot$ can be distinct from the inverse $\bar{a}$ of $a$ for $\circ$. 

Write 
\begin{equation}\label{EqDefLR}
\lambda_a(b) = a^{-1}\cdot (a\circ b),\qquad \rho_a(b) = (a\circ b)\cdot a^{-1}. 
\end{equation}

\begin{Lem}
Let $(A,\circ,\cdot)$ be a skew brace. Then
\begin{itemize}
\item  $\lambda$ is an action by automorphisms of $(A,\circ)$ on $(A,\cdot)$,
\[
\lambda_a(b\cdot c) = \lambda_a(b)\cdot \lambda_a(c),\qquad \lambda_{a\circ b}(c) = \lambda_a(\lambda_b(c)). 
\]
\item $\rho$ is an action by automorphisms of $(A,\circ)$ on $(A,\cdot)$,
\[
\rho_a(b\cdot c) = \rho_a(b)\cdot  \rho_a(c),\qquad \rho_{a\circ b}(c) = \rho_a(\rho_b(c)). 
\]
\end{itemize}
\end{Lem}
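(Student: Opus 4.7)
The plan is to verify the four identities in order, using only the distributivity law and elementary manipulations with the two group structures.

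First, the two homomorphism statements $\lambda_a(b\cdot c) = \lambda_a(b)\cdot\lambda_a(c)$ and $\rho_a(b\cdot c)=\rho_a(b)\cdot\rho_a(c)$ are immediate unwindings: substituting the definition \eqref{EqDefLR} into the left-hand side and then applying distributivity $a\circ(b\cdot c) = (a\circ b)\cdot a^{-1}\cdot(a\circ c)$ directly yields the right-hand side, after the trivial cancellation $a^{-1}\cdot a = e_A$ in the $\rho$ case.

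The action identities require a preparatory computation. Applying distributivity to $b\cdot b^{-1}=e_A$ gives $a = a\circ e_A = (a\circ b)\cdot a^{-1}\cdot (a\circ b^{-1})$, and solving this yields the key identity
\[
(a\circ b)^{-1} \;=\; a^{-1}\cdot (a\circ b^{-1})\cdot a^{-1}.
\]
With this in hand, I compute $\lambda_a(\lambda_b(c))$ by plugging in the definition and then applying distributivity to the factor $a\circ(b^{-1}\cdot (b\circ c))$; the resulting expression is $a^{-1}\cdot(a\circ b^{-1})\cdot a^{-1}\cdot(a\circ(b\circ c))$, and the key identity collapses the first three factors to $(a\circ b)^{-1}$, leaving $(a\circ b)^{-1}\cdot((a\circ b)\circ c) = \lambda_{a\circ b}(c)$. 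The computation for $\rho$ is entirely parallel: expand $\rho_a(\rho_b(c))$, apply distributivity to $a\circ((b\circ c)\cdot b^{-1})$, and use the same key identity, this time to collapse the trailing three factors to $(a\circ b)^{-1}$.

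The only nontrivial step is spotting the identity $(a\circ b)^{-1} = a^{-1}\cdot (a\circ b^{-1})\cdot a^{-1}$; once that is available, everything is routine substitution. The fact that $\lambda_a$ and $\rho_a$ are \emph{invertible} (hence automorphisms rather than just endomorphisms) is then automatic, since the action property applied to $\bar{a}$, the $\circ$-inverse of $a$, produces $\lambda_{\bar a}$ and $\rho_{\bar a}$ as two-sided inverses of $\lambda_a$ and $\rho_a$ respectively.
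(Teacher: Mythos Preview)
Your proof is correct; the key identity $(a\circ b)^{-1} = a^{-1}\cdot(a\circ b^{-1})\cdot a^{-1}$ is exactly what is needed, and the four verifications go through as you describe. (The remark about a cancellation $a^{-1}\cdot a$ in the $\rho$ homomorphism case is slightly puzzling, since $\rho_a(b\cdot c) = (a\circ b)\cdot a^{-1}\cdot(a\circ c)\cdot a^{-1} = \rho_a(b)\cdot\rho_a(c)$ is immediate with no cancellation; but this is cosmetic.)

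The paper takes a different route. For $\lambda$ it simply cites \cite[Proposition 1.9]{GV17}, and for $\rho$ it does not compute at all: instead it observes that $(A,\circ,\cdot^{\opp})$, with $a\cdot^{\opp}b = b\cdot a$, is again a skew brace, and that the $\lambda$-map of this opposite skew brace is precisely $\rho$ for the original one. Thus the $\rho$ statement is deduced from the $\lambda$ statement by a one-line duality. Your approach is self-contained and makes no appeal to the literature, at the cost of repeating essentially the same computation twice; the paper's approach is shorter and explains conceptually why the $\rho$ case requires no new work, but it outsources the $\lambda$ verification and asks the reader to check (or believe) that $(A,\circ,\cdot^{\opp})$ satisfies the skew-brace axiom.
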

\begin{proof}
The statement for $\lambda$ follows by  \cite[Proposition 1.9]{GV17}. The statement for $\rho$ follows by observing that $(A,\circ,\cdot^{\opp})$, with $a\cdot^{\opp} b =b\cdot a$, is also a skew brace. 
\end{proof}

Let 
\[
X_A = \End(A,\cdot)
\] 
be the monoid of all group endomorphisms of $(A,\cdot)$ with composition as product. Then we can endow $X_A$ (as a set) with the $(A,\circ)$-action
\[
a\diamond \chi := \lambda_a \chi \rho_a^{-1}. 
\]

We propose the following definition for \emph{action} of a skew brace. It will be motivated in the next sections.

\begin{Def}\label{DefAct}
An \emph{action} of a skew brace consists of a triple $(X,\circ,\pi)$ where $X$ is a set endowed with a left action $\circ$ of $(A,\circ)$ and where $\pi$ is an equivariant map 
\begin{equation}\label{EqEquivDef}
\pi: (X,\circ) \rightarrow (X_A,\diamond),\qquad \pi(a\circ x) = a\diamond \pi(x).
\end{equation}
\end{Def}

We write in the following $\pi(x) = \pi_x\in \End(A,\cdot)$. The following proposition shows that the equivariance condition on $\pi$ tells us how to move the products $\cdot$ and $\circ$ through the $\pi_x$. 

\begin{Prop}\label{PropCorrEndo}
Let $(A,\circ,\cdot)$ be a skew brace, and let $(X,\circ)$ be a left action of $(A,\circ)$. Then $(A,\circ,\pi)$ defines an action of $(A,\circ,\cdot)$ if and only if the $\pi_x$ satisfy
\[
\pi_x(a\cdot b) = \pi_x(a)\cdot \pi_x(b)
\]
and
\begin{equation}\label{EqComppicirc}
\pi_x(a\circ b) = \lambda_a(\pi_{\bar{a}\circ x}(b))\cdot \pi_x(a),
\end{equation}
\end{Prop}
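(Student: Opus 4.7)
The first condition $\pi_x(a\cdot b) = \pi_x(a)\cdot \pi_x(b)$ is precisely the statement that each $\pi_x$ lies in $X_A = \End(A,\cdot)$, which is part of the data required by Definition~\ref{DefAct}; so the starting observation is that this condition is equivalent to $\pi$ being well-defined as a set map $X \to X_A$. With that in hand, the remaining task is to show that the equivariance condition \eqref{EqEquivDef} is equivalent to identity \eqref{EqComppicirc}.

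I would unpack equivariance pointwise. Since $(a\diamond \pi_x)(b) = \lambda_a(\pi_x(\rho_a^{-1}(b)))$, equivariance amounts to
\[
\pi_{a\circ x}(b) = \lambda_a\bigl(\pi_x(\rho_a^{-1}(b))\bigr) \qquad \text{for all } a,b\in A, \, x\in X.
\]
The natural trick to eliminate the $\rho_a^{-1}$ is to substitute $b = \rho_a(c) = (a\circ c)\cdot a^{-1}$. Using condition (1) to distribute $\pi_{a\circ x}$ across the product $(a\circ c)\cdot a^{-1}$, and the automatic fact that a $\cdot$-homomorphism sends $a^{-1}$ to $\pi_{a\circ x}(a)^{-1}$, one obtains
\[
\pi_{a\circ x}(a\circ c)\cdot \pi_{a\circ x}(a)^{-1} = \lambda_a(\pi_x(c)),
\]
equivalently $\pi_{a\circ x}(a\circ c) = \lambda_a(\pi_x(c))\cdot \pi_{a\circ x}(a)$. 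Setting $y = a\circ x$ (so $x = \bar{a}\circ y$) and then relabelling $y\to x$, $c\to b$, this becomes exactly \eqref{EqComppicirc}. The converse direction is carried out by running the same chain of equalities in reverse, replacing $b$ in \eqref{EqComppicirc} by $\rho_a^{-1}(b')$ and reassembling.

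I do not expect a genuine obstacle here: the definition $\rho_a(c) = (a\circ c)\cdot a^{-1}$ is tailored precisely so that $\rho_a^{-1}$ can be absorbed at the cost of introducing the $\circ$-product $a\circ c$ that appears on the left of \eqref{EqComppicirc}. The only real bookkeeping care is to keep the two inverses straight, namely $\bar{a}$ (with respect to $\circ$) versus $a^{-1}$ (with respect to $\cdot$), and to use that each $\pi_x$ respects the common unit $e_A$ and $\cdot$-inverses by virtue of condition (1).
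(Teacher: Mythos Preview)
Your argument is correct and is essentially the same as the paper's: both proofs identify the first condition with $\pi_x\in\End(A,\cdot)$ and then use the endomorphism property together with the identity $\rho_a(c)=(a\circ c)\cdot a^{-1}$ (equivalently $\rho_{\bar a}=\rho_a^{-1}$) to pass between equivariance $\pi_{a\circ x}=\lambda_a\pi_x\rho_a^{-1}$ and the twisted cocycle identity \eqref{EqComppicirc}. The only cosmetic difference is the direction of the substitution: you eliminate $\rho_a^{-1}$ by plugging in $b=\rho_a(c)$, while the paper first rewrites \eqref{EqComppicirc} as $\pi_{a\circ x}(b)=\lambda_a\bigl(\pi_x(\bar a\circ b)\cdot\pi_x(\bar a)^{-1}\bigr)$ and then recognizes $(\bar a\circ b)\cdot\bar a^{-1}=\rho_{\bar a}(b)$.
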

\begin{proof}  We can rewrite condition \eqref{EqComppicirc} as 
\begin{equation}\label{EqComppicircOther}
\pi_{a\circ x}(b) = \lambda_a\left(\pi_x(\bar{a}\circ b) \cdot \pi_x(\bar{a})^{-1}\right).
\end{equation}
Assuming the $\pi_x$ are $(A,\cdot)$-endomorphisms, this is equivalent with
\begin{equation}\label{EqLemId5}
\pi_{a\circ x}(b) = \lambda_a(\pi_x((\bar{a}\circ b)\cdot\bar{a}^{-1})) = (\lambda_a\pi_x\rho_{\bar{a}})(b) = (\lambda_a \pi_x \rho_a^{-1})(b),
\end{equation}
which is $\pi_{a\circ x} = a\diamond \pi_x$. 
\end{proof}

Condition \eqref{EqComppicirc} says the $\pi_x$ are a `twisted cocycle family' with respect to $\lambda: (A,\circ) \times (A,\cdot^{\opp}) \rightarrow (A,\cdot^{\opp})$.



\section{Examples of skew brace actions}

The first example shows that an ordinary action of $(A,\circ)$ can be made into a skew brace action of $(A,\circ,\cdot)$ in a trivial way. 

\begin{Exa}\label{ExaTriv}
Let $(X,\circ)$ be an action of $(A,\circ)$. Then $(X,\circ)$ becomes an action for $(A,\circ,\cdot)$ by putting $\pi_x(a) = e_A$ for all $x\in X$ and $a\in A$. We call this the \emph{trivially extended skew brace action} associated to $(X,\circ)$.
\end{Exa}

For example, in this way the `action of a skew brace on a skew brace' \cite[Definition 2.35]{SV18} can be interpreted as a particular action in our sense. In the case of braces, the same applies to the the brace modules of \cite{Rump09}, which are just left modules\footnote{In \cite{Rump09}, the author works with right modules and right braces, we adapt the definition straightforwardly to the setting of left braces.} over $(A,\circ)$.

\begin{Exa}
By Proposition \ref{PropCorrEndo}, actions of $(A,\circ,\cdot)$ on a one-element set are in one-to-one correspondence with endomorphisms $\chi: (A,\cdot) \rightarrow (A,\cdot)$ such that $\chi$ is a 1-cocycle with respect to $\lambda$ as a group of automorphisms of $(A,\cdot^{\opp})$,
\[
\chi(a\circ b) = \lambda_a(\chi(b))\cdot \chi(a).
\]
These are then precisely the $\diamond$-fixed points in $\End(A,\cdot)$, i.e.~ the maps which intertwine $\rho$ with $\lambda$. For example, $\chi = \id_A$ satisfies this requirement if and only if $(A,\circ,\cdot)$ is a brace, i.e. $\cdot$ is commutative. 
\end{Exa}

\begin{Exa}\label{ExaIrrAction}
More generally, let $(B,\circ) \subseteq (A,\circ)$ be a subgroup, and assume that we have an endomorphism $\chi: (A,\cdot) \rightarrow (A,\cdot)$ such that $\chi$ is a cocycle with respect to $\lambda:(B,\circ)\times (A,\cdot^{\opp}) \rightarrow (A,\cdot^{\opp})$, i.e.
\[
\chi(b\circ a) = \lambda_b(\chi(a))\cdot \chi(b),\qquad a\in A, b\in B.
\]
Then it is easily seen that $B\diamond\chi =\chi$. Hence with
\[
\pi(a\circ B) := a\diamond \chi,
\]
we obtain an action $(A/B,\circ,\pi)$ of $(A,\circ,\cdot)$ with respect to the usual $(A,\circ)$-action on $A/B$. We will call such actions \emph{irreducible}. It is clear that, up to isomorphism, any action of $(A,\circ,\cdot)$ is a direct product of actions of the above form. 
\end{Exa}

\begin{Exa}
Returning again to the case of braces, where $\cdot$ is commutative, we see that any action of $(A,\circ)$ on a set $X$ becomes an action of $(A,\circ,\cdot)$ by taking $\pi(x) = \id_A$ for all $x$.  
\end{Exa}

The following example introduces the \emph{universal action}.

\begin{Exa}
Let $(A,\circ,\cdot)$ be a skew brace. We call $(X_A,\diamond,\id)$ the \emph{universal action} of $(A,\circ,\cdot)$. 
\end{Exa}

Next, we present the \emph{standard action} of a skew brace.

\begin{Exa}\label{ExaStandard}
Let $(A,\circ,\cdot)$ be a skew brace. Then $(A,\circ)$ becomes an 
action for $(A,\circ,\cdot)$ with respect to
\[
\pi_a(b) = \alpha_a(b) := a^{-1}\cdot b\cdot a. 
\]
Indeed, we are to check that $\alpha_{a\circ b} = a\diamond \alpha_b$. But observing that $\rho_a = \alpha_a^{-1}\lambda_a$ and $\alpha_{a\cdot b} = \alpha_b\alpha_a$, we find
\[
a\diamond \alpha_b = \lambda_a\alpha_b \rho_a^{-1} = \lambda_a \alpha_b\lambda_a^{-1} \alpha_a =\alpha_{\lambda_a(b)}\alpha_a = \alpha_{a\cdot \lambda_a(b)} = \alpha_{a\circ b}.
\]
\end{Exa}

A little more generally, we get the following class of inner actions.

\begin{Exa}
Let $(A,\circ,\cdot)$ be a skew brace, and let $(X,\circ)$ be an action of $(A,\circ)$. Assume that 
\[
c: (X,\circ) \rightarrow (A,\circ),\qquad c(a\circ x) = a\circ c(x)
\]
is an equivariant map from $X$ to $A$. Then 
\[
\pi_x(a) = c_x^{-1} \cdot a \cdot c_x
\]
defines an action $(X,\circ,\pi)$ of $(A,\circ,\cdot)$. 
\end{Exa}

The following example treats a particular class of skew braces constructed from groups.

\begin{Exa} Let $(A,\cdot)$ be a group, and consider the associated brace  $(A,\cdot,\cdot)$. Then $\lambda_a  = \id$, while $\rho_a(b) = a\cdot b\cdot a^{-1}$. It follows by Example \ref{ExaIrrAction} that an irreducible action of $(A,\cdot,\cdot)$ is determined by a subgroup $B$ of $(A,\cdot)$  and an endomorphism $\chi \in \End(A,\cdot)$ satisfying
\[
\Ker(\chi) \supseteq \{a\cdot b\cdot a^{-1}\cdot b^{-1}  \mid a\in A,b\in B\}.
\] 
\end{Exa}

\begin{Exa}
By \cite[Example 1.13]{SV18}, the permutation group $(A,\cdot) = S_3$ can be endowed with a unique skew brace structure such that 
\[
\lambda_{\id} = \lambda_{(123)} = \lambda_{132} = \id,\qquad \lambda_{(12)} = \lambda_{(23)} = \lambda_{(31)} = \Ad(23),
\]
with $(A,\circ) \cong \Z/6\Z$ and $(12)$ having order 6 for $\circ$. 

Since $\rho_a(b) = a^{-1}\cdot \lambda_a(b)\cdot a^{-1}$, we then find
\[
\rho_{\id} = \rho_{(23)} = \id,\qquad \rho_{(12)} = \rho_{(123)} = \Ad(123),\qquad \rho_{(31)} = \rho_{(321)} = \Ad(321).
\]
Now as a set, we have 
\[
\End(S_3) \underset{\mu}{\cong} S_3\sqcup \{1,2,3\}\sqcup \{\bullet\},
\]
where, identifying $\{1,2,3\}\cong \Z/3\Z$ and writing $e(t) = 0$ for $t$ even and $e(t) = 1$ for $t$ odd in $S_3$,
\[
\mu_{s}(t) = sts^{-1},\qquad s,t\in S_3,\qquad \mu_m(t) = (m,m+1)^{e(t)},\qquad \mu_{\bullet}(t) = \id,\qquad s,t\in S_3,m\in \{1,2,3\}.
\]
An easy computation then shows that, for $s\in S_3$ and $k\in \{1,2,3\}$, 
\[
(12) \diamond \mu_s = \mu_{(23)s(321)},\qquad (12)\diamond \mu_k = \mu_{(12)k},\qquad (12)\diamond \mu_{\bullet} = \mu_{\bullet}.
\]
We hence find that the universal action $\diamond$ on $\End(S_3)$ decomposes into an orbit $\mu(S_3)$ of order 6, which is equivalent with the standard action of $(A,\circ) = \Z/6\Z$, one two-element orbit $\{\mu_2,\mu_3\}$, and two one-element orbits $\{\mu_1,\mu_{\bullet}\}$. In particular, $(A,\circ,\cdot)$ has a non-trivial action on the one-element set $\{\mu_1\}$.
\end{Exa}

\section{Settheoretic reflection equation}

Diagrams are read from top to bottom.

\begin{Def}
Let $Z$ be a set, and let $r: Z\times Z\rightarrow Z \times Z$ be an invertible map. We say that $r$ satisfies the \emph{braid relation} if
\begin{equation}\label{EqBraidFund}
(r\times \id_Z)(\id_Z\times r)(r\times \id_Z) = (\id_Z\times r)(r\times \id_Z)(\id_Z\times r).
\end{equation}
\end{Def}

We represent $r$ as a \emph{braid}, 
\begin{figure}[h]
\centering
\begin{tikzpicture}
\node at (0,-1) {$r=$};
\begin{scope}[shift={(1,-0.3)},scale= 0.8]
\begin{knot}[clip width=5, clip radius=8pt]
\strand[thick](0,0)
to [out=down, in=up](2,-2);
\strand[thick](2,0)
to [out=down, in=up](0,-2);
\end{knot}
\end{scope}
\node at (5,-1) {$r^{-1}=$};
\begin{scope}[shift={(6,-0.3)},scale= 0.8]
\begin{knot}[clip width=5, clip radius=8pt]
\strand[thick](0,0)
to [out=down, in=up](2,-2);
\strand[thick](2,0)
to [out=down, in=up](0,-2);
\flipcrossings{1};
\end{knot}
\end{scope}
\end{tikzpicture}
\end{figure}

Then the braid relation corresponds pictorially to the identity of braids

\begin{figure}[h]
\centering
\begin{tikzpicture}
\begin{scope}
\begin{knot}[clip width=5, clip radius=8pt]
\strand[thick](0,0)
to [out=down, in=up](1,-1)
to [out=down, in=up](2,-2)
to [out =down, in=up](2,-3);
\strand[thick](1,0)
to [out=down, in=up](0,-1)
to [out=down, in=up](0,-2)
to [out =down, in = up](1,-3);
\strand[thick](2,0)
to [out=down, in=up](2,-1)
to [out=down, in=up](1,-2)
to [out =down, in =up](0,-3);
\end{knot}
\end{scope}
\node at (3,-1.5) {$=$};
\begin{scope}[shift={(4,0)}]
\begin{knot}[clip width=5, clip radius=8pt]
\strand[thick](0,0)
to [out=down, in=up](0,-1)
to [out=down, in=up](1,-2)
to [out =down, in=up](2,-3);
\strand[thick](1,0)
to [out=down, in=up](2,-1)
to [out=down, in=up](2,-2)
to [out =down, in = up](1,-3);
\strand[thick](2,0)
to [out=down, in=up](1,-1)
to [out=down, in=up](0,-2)
to [out =down, in =up](0,-3);
\end{knot}
\end{scope}
\end{tikzpicture}
\caption{Braid relation}
\label{fig:BraidRelation}
\end{figure}

If $r$ satisfies the braid relation, then $R(x,y) = r(y,x)$ satisfies the set-theoretic \emph{quantum Yang-Baxter equation}
\[
R_{12}R_{13}R_{23} = R_{23}R_{13}R_{12},
\]
where $R_{ij}$ is $R$ acting on the $i$th and $j$th factor and the identity on the other factors.

A close cousin to the braid relation is the \emph{reflection equation}.

\begin{Def} Let $X,Z$ be sets, and let $r: Z\times Z \rightarrow Z\times Z$ satisfy the braid relation. We say that  a map
\[
k: Z\times X \rightarrow Z\times X
\] 
satisfies the \emph{reflection equation} with respect to $r$, or that $k$ is a solution to the reflection equation for $r$ over the set $X$, if the following maps from $Z\times Z\times X$ to itself are equal:
\begin{equation}\label{EqRE}
(r\times \id_X) (\id_Z\times k) (r\times \id_X) (\id_Z\times k) = (\id_Z\times k) (r\times \id_X) (\id_Z\times k) (r\times \id_X).
\end{equation}
\end{Def}

We represent $k$ as a full twist around a fixed pole, 

\begin{figure}[h]
\centering
\begin{tikzpicture}
\node at (0,-1) {$k = $};
\begin{scope}[shift={(1,0)}]
\begin{knot}[clip width=5, clip radius = 8pt]
\strand[thick](0,0)
to [out=down, in=up](1.5,-1)
to [out = down,in =up](0,-2);
\strand[line width=1.8,red](1,0)
to [out = down,in = up](1,-2);
\flipcrossings{2}
\end{knot}
\end{scope}
\end{tikzpicture}
\end{figure}

Then the reflection equation is represented pictorially by the \emph{sliding rule}

\begin{figure}[h]
\centering
\begin{tikzpicture}
\begin{scope}[scale=0.66]
\begin{knot}[clip width=5,clip radius=8pt]
\strand[thick](0,0)
to [out=down, in=up](0,-2)
to [out=down, in=up](1,-3)
to [out =down, in=up](2.5,-4)
to [out =down, in=up](1,-5)
to [out =down, in=up](0,-6);
\strand[thick](1,0)
to [out=down, in=up](2.5,-1)
to [out=down, in=up](1,-2)
to [out =down, in = up](0,-3)
to [out = down, in = up](0,-5)
to [out = down, in = up](1,-6);
\strand[line width=1.8,red](2,0)
to [out=down, in=up](2,-0.4);
\strand[line width = 1.8,red](2,-0.6)
to [out=down, in=up](2,-3)
to [out=down, in=up](2,-4)
to [out=down, in=up](2,-5)
to [out=down, in=up](2,-6);
\flipcrossings{2,4,5}
\end{knot}
\end{scope}
\node at (3,-2) {$=$};
\begin{scope}[shift={(4,0)},scale=0.66]
\begin{knot}[clip width=5,clip radius=8pt]
\strand[thick](0,0)
to [out=down, in=up](1,-1)
to [out=down, in=up](2.5,-2)
to [out =down, in=up](1,-3)
to [out =down, in=up](0,-4)
to [out =down, in=up](0,-6);
\strand[thick](1,0)
to [out=down, in=up](0,-1)
to [out=down, in=up](0,-2)
to [out =down, in = up](0,-3)
to [out = down, in = up](1,-4)
to [out = down,in = up](2.5,-5)
to [out = down, in = right](2.2,-5.25);
\strand[thick](1.8,-5.25)
to [out = left, in = up](1,-6);
\strand[line width=1.8pt,red](2,0)
to [out=down, in=up](2,-6);
\flipcrossings{2,4,6}
\end{knot}
\end{scope}
\end{tikzpicture}
\caption{Reflection Equation}
\label{fig:ReflectionEquation}
\end{figure}
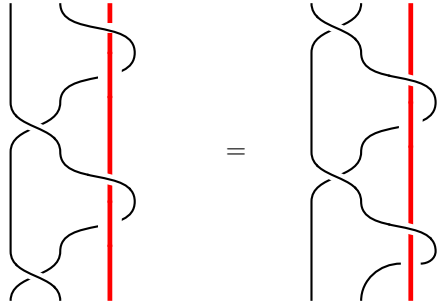


\begin{Exa}
If $r$ satisfies the braid relation, then 
\[
k = r^2: Z\times Z\rightarrow Z\times Z
\]
 satisfies the reflection equation with respect to $r$.
\end{Exa}

\begin{Rem}
In general, the reflection equation appears with an additional `twist' by an automorphism of the group, or with extra parameters. Here we only study the simplest case.
\end{Rem}

\section{Groups with a braiding operator}\label{SecBraidGroup}

\begin{Def}\cite{LYZ00} Let $(A,\circ)$ be a group. A \emph{braiding operator} on $(A,\circ)$  is a map
\[
r:A\times A \rightarrow A\times A,\quad (a,b) \mapsto (a\rhd b,a\lhd b)
\]
such that, with $m_A:A\times A \rightarrow A$ the multiplication and 
\[
\eta:\{\bullet\}\rightarrow A,\quad \bullet \mapsto e_A
\]
the unit map, we have
\begin{equation}\label{EqBraid1}
r(m_A\times \id_A) = (\id_A\times m_A)(r\times \id_A)(\id_A\times r),
\end{equation}
\begin{equation}\label{EqBraidr2}
r(\id_A\times m_A) = (m_A\times \id_A) (\id_A\times r) (r\times \id_A),
\end{equation}
\begin{equation}\label{EqBraidUnit}
r(\id_A\times \eta) = \eta \times \id_A,\qquad r(\eta\times \id_A) = \id_A \times \eta,
\end{equation}
\begin{equation}\label{EqBraidCom}
m_A\circ r = m_A.
\end{equation}
We then call $(A,\circ,r)$ a group with braiding.
\end{Def}
Pictorially, the conditions look as follows:

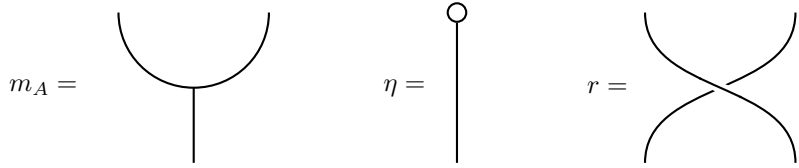
\begin{figure}[h]
\centering
\begin{tikzpicture}
\node at (0,-1) {$m_A = $};
\begin{scope}[shift={(1,0)}]
\begin{knot}[clip width=5, clip radius=8pt]
\strand[thick](0,0)
to [out=down, in=left](1,-1)
to [out=right, in=down](2,0);
\strand[thick](1,-1)
to [out=down, in=up](1,-2);
\end{knot}
\end{scope}
\node at (4.8,-1) {$\eta = $};
\begin{scope}[shift={(5.5,0)}]
\begin{knot}[clip width=5, clip radius=8pt]
\node(A)at(0,0)[circle, draw=black, inner sep=0pt, minimum size=7pt, thick]{};
\strand[thick](A.south)
to [out=down, in=up](0,-2);
\end{knot}
\end{scope}
\node at (7.5,-1) {$r=$};
\begin{scope}[shift={(8,0)}]
\begin{knot}[clip width=5, clip radius=8pt]
\strand[thick](0,0)
to [out=down, in=up](2,-2);
\strand[thick](2,0)
to [out=down, in=up](0,-2);
\end{knot}
\end{scope}
\end{tikzpicture}
\caption{Pictorial representation of multiplication, unit and braid operator}
\label{fig:BraidGroupStructure}
\end{figure}

\begin{figure}[h]
\centering
\begin{tikzpicture}
\begin{scope}
\begin{knot}[clip width=5, clip radius=8pt]
\strand[thick](0,0)
to [out=down, in=left](0.5,-0.5)
to [out=right, in=down](1,0);
\strand[thick](0.5,-0.5)
to [out=down, in=up](1.5,-2);
\strand[thick](1.5,0)
to [out=down, in=up](0.5,-2);
\end{knot}
\end{scope}
\node at (2,-1) {$=$};
\begin{scope}[shift={(3,0)},scale = 0.66]
\begin{knot}[clip width=5, clip radius=8pt]
\strand[thick](0,0)
to [out=down, in=up](0,-1)
to [out=down, in=up](1,-2)
to [out = down, in = left](1.5,-2.5);
\strand[thick](1,-0)
to [out=down, in=up](2,-1)
to [out = down,in = up](2,-2)
to [out = down, in = right](1.5,-2.5);
\strand[thick](2,0)
to [out=down, in=up](1,-1)
to [out=down, in=up](0,-2)
to [out = down, in = up](0,-3);
\strand[thick](1.5,-2.5)
to [out= down,in =up](1.5,-3);
\end{knot}
\end{scope}
\begin{scope}[shift={(8,0)}]
\begin{knot}[clip width=5,clip radius=8pt]
\strand[thick](1.5,0)
to [out=down, in=right](1,-0.5)
to [out=left, in=down](0.5,0);
\strand[thick](1,-0.5)
to [out=down, in=up](0,-2);
\strand[thick](0,0)
to [out=down, in=up](1,-2);
\flipcrossings{1}
\end{knot}
\end{scope}
\node at (10,-1) {$=$};
\begin{scope}[shift={(11,0)},scale = 0.66]
\begin{knot}[clip width=5, clip radius=8pt]
\strand[thick](2,0)
to [out=down, in=up](2,-1)
to [out=down, in=up](1,-2)
to [out = down, in = right](0.5,-2.5);
\strand[thick](1,-0)
to [out=down, in=up](0,-1)
to [out = down,in = up](0,-2)
to [out = down, in = left](0.5,-2.5);
\strand[thick](0,0)
to [out=down, in=up](1,-1)
to [out=down, in=up](2,-2)
to [out = down, in = up](2,-3);
\strand[thick](0.5,-2.5)
to [out= down,in =up](0.5,-3);
\flipcrossings{1,2}
\end{knot}
\end{scope}
\end{tikzpicture}
\caption{Relations \eqref{EqBraid1} and \eqref{EqBraidr2}}
\label{fig:BraidGroupRelations1}
\end{figure}

\begin{figure}[h]
\centering
\begin{tikzpicture}
\begin{scope}
\begin{knot}[clip width=5,clip radius=8pt]
\node(A)at(0,0)[circle, draw=black, inner sep=0pt, minimum size=7pt, thick]{};
\strand[thick](A.south)
to [out=down, in=up](1,-1);
\strand[thick](1,0)
to [out=down, in=up](0,-1);
\end{knot}
\end{scope}
\node at (2,-0.5) {$=$};
\begin{scope}[shift={(3,0)}]
\begin{knot}[clip width=5, clip radius=8pt]
\node(A)at(1,0)[circle, draw=black, inner sep=0pt, minimum size=7pt, thick]{};
\strand[thick](0,0)
to [out=down, in=up](0,-1);
\strand[thick](A.south)
to [out=down, in=up](1,-1);
\end{knot}
\end{scope}
\begin{scope}[shift={(8,0)}]
\begin{knot}[clip width=5,clip radius=8pt]
\node(A)at(1,0)[circle, draw=black, inner sep=0pt, minimum size=7pt, thick]{};
\strand[thick](0,0)
to [out=down, in=up](1,-1);
\strand[thick](A.south)
to [out=down, in=up](0,-1);
\end{knot}
\end{scope}
\node at (10,-0.5) {$=$};
\begin{scope}[shift={(11,0)}]
\begin{knot}[clip width=5, clip radius=8pt]
\node(A)at(0,0)[circle, draw=black, inner sep=0pt, minimum size=7pt, thick]{};
\strand[thick](A.south)
to [out=down, in=up](0,-1);
\strand[thick](1,0)
to [out=down, in=up](1,-1);
\end{knot}
\end{scope}
\end{tikzpicture}
\caption{Relations \eqref{EqBraidUnit}}
\label{fig:BraidGroupRelations2}
\end{figure}

\begin{figure}[h]
\centering
\begin{tikzpicture}
\begin{scope}
\begin{knot}[clip width=5, clip radius=8pt]
\strand[thick](0,0)
to [out=down, in=up](1,-1)
to [out=down, in=right](0.5,-1.5);
\strand[thick](1,0)
to [out=down, in=up](0,-1)
to [out = down, in = left](0.5,-1.5);
\strand[thick](0.5,-1.5)
to [out=down, in=up](0.5,-2);
\end{knot}
\end{scope}
\node at (2,-1) {$=$};
\begin{scope}[shift={(3,-0.5)}, scale = 0.66]
\begin{knot}[clip width=5, clip radius=8pt]
\strand[thick](0,0)
to [out=down, in=left](1,-1)
to [out=right, in=down](2,0);
\strand[thick](1,-1)
to [out=down, in=up](1,-2);
\end{knot}
\end{scope}
\end{tikzpicture}
\caption{Relation \eqref{EqBraidCom}}
\label{fig:BraidGroupRelations3}
\end{figure}

If $(A,\circ,r)$ is a group with braiding, $r: A\times A \rightarrow A \times A$ is automatically invertible and satisfies the braid relation \cite[Corollary 1]{LYZ00}. 

\begin{Exa}\cite[Theorem 9]{LYZ00}
Let 
\[
r: Z\times Z \rightarrow Z\times Z,\quad (w,z)\mapsto (w\rhd z,w\lhd z)
\]
satisfy the braid relation, and let $A(Z)$ be the universal group with generators $\{a_z\mid z\in Z\}$ and relations 
\[
a_wa_z= a_{w\rhd z}a_{w\lhd z}.
\]
Then $A(Z)$ is a group with braiding operator $\widetilde{r}$ uniquely determined by 
\[
\widetilde{r}(a_w,a_z) = (a_{w\rhd z},a_{w\lhd z}).
\]
The study of (semi-)groups of this type was initiated in \cite{G-IVdB98}.
\end{Exa}

The notions of skew brace and group with braiding are equivalent, as the next theorem shows. 

\begin{Theorem}\cite[Theorem 2]{LYZ00},\cite[Remark 3.2]{GV17} \label{TheoEquivBraidGroupBrace}
 Let $(A,\circ)$ be a group. Then there is a one-to-one correspondence between skew brace structures $(A,\circ,\cdot)$ and braiding operators $r$ on $(A,\circ)$, determined by
\begin{equation}\label{EqDefDotBraid}
a\cdot b = a\circ (\bar{a} \rhd b),\qquad a,b\in A,
\end{equation}
or, equivalently,
\begin{equation}\label{EqDefDotBraidEquiv}
a\rhd b = \lambda_a(b).
\end{equation}
\end{Theorem}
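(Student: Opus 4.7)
The plan is to define mutually inverse constructions between skew brace structures on $(A,\circ)$ and braiding operators on $(A,\circ)$, and to verify the required axioms on each side.

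\emph{Forward direction.} Given a skew brace $(A,\circ,\cdot)$, condition \eqref{EqDefDotBraidEquiv} forces $a\rhd b=\lambda_a(b)$, and then the compatibility \eqref{EqBraidCom} forces
\[
a\lhd b=\overline{\lambda_a(b)}\circ a\circ b.
\]
Adopting these as definitions, I would check the braiding-operator axioms. The unit conditions \eqref{EqBraidUnit} reduce to $\lambda_{e_A}=\id$ and $\lambda_a(e_A)=e_A$, both immediate from \eqref{EqDefLR}, and \eqref{EqBraidCom} is built in. The first braid identity \eqref{EqBraid1}, after projecting onto each coordinate and using \eqref{EqBraidCom} to cancel the second components, reduces to $\lambda_{a\circ b}=\lambda_a\lambda_b$, which is already recorded. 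The identity \eqref{EqBraidr2} collapses similarly to the single equation
\[
\lambda_a(b\circ c)=\lambda_a(b)\circ\lambda_{a\lhd b}(c),
\]
which I would verify by writing $b\circ c=b\cdot\lambda_b(c)$, expanding with $\lambda_a(b\cdot c)=\lambda_a(b)\cdot\lambda_a(c)$, and converting back via $x\circ y=x\cdot\lambda_x(y)$ together with $\lambda_a(b)\circ(a\lhd b)=a\circ b$.

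\emph{Reverse direction.} Given a braiding operator $r$, I would define $\cdot$ by \eqref{EqDefDotBraid} and show that $(A,\cdot)$ is a group with unit $e_A$. The unit conditions \eqref{EqBraidUnit} give $e_A\cdot b=b=b\cdot e_A$; associativity would follow by expanding $(a\cdot b)\cdot c$ and $a\cdot(b\cdot c)$ using \eqref{EqDefDotBraid} and reducing both sides to a common expression via \eqref{EqBraidr2} and \eqref{EqBraidCom}; inverses would come from the automatic bijectivity of $b\mapsto a\rhd b$ recorded in \cite[Corollary~1]{LYZ00}. For the skew-brace distributivity axiom, I would substitute \eqref{EqDefDotBraid} throughout and $a\circ b=(a\rhd b)\circ(a\lhd b)$ where needed, and then apply \eqref{EqBraidr2} and \eqref{EqBraidCom} to rewrite both sides as the same $\circ$-expression.

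\emph{Mutual inversion and main obstacle.} Starting from a skew brace, reapplying \eqref{EqDefDotBraid} with $\rhd=\lambda$ yields the candidate product $a\circ\lambda_{\bar a}(b)$; the skew-brace distributivity applied to $\bar a\circ(a\cdot b)$ gives exactly $\lambda_{\bar a}(b)$, so $\cdot$ is recovered. Starting from a braiding operator, \eqref{EqDefDotBraidEquiv} holds by the very definition of $\cdot$, after which \eqref{EqBraidCom} forces the second component of $r$. The main obstacle I anticipate lies in the reverse direction: deriving associativity and the skew-brace distributivity of $\cdot$ purely from the braiding axioms is an intricate bookkeeping exercise, since the product $\cdot$ is not available in advance and every manipulation must be justified by repeatedly invoking \eqref{EqBraid1}--\eqref{EqBraidCom} without circularity.
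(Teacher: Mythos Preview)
The paper does not give its own proof of this theorem; it simply cites \cite[Theorem 2]{LYZ00} and \cite[Remark 3.2]{GV17}. Your direct verification is correct and is essentially the standard argument found in those references: the reduction of \eqref{EqBraid1} to $\lambda_{a\circ b}=\lambda_a\lambda_b$ via \eqref{EqBraidCom}, and of \eqref{EqBraidr2} to the twisted multiplicativity of $\lambda_a$, is exactly how the forward direction goes, and the reverse direction is the expected bookkeeping. What the paper \emph{does} offer, immediately after the statement, is a conceptual repackaging rather than a proof: the braiding axioms \eqref{EqBraid1}--\eqref{EqBraidUnit} say that $r$ is a matching of $(A,\circ)$ with itself, so one can form the twisted product group $A\twistprod A$, and then \eqref{EqBraidCom} makes $m_A:A\twistprod A\to(A,\circ)$ a group homomorphism; identifying $\Ker(m_A)$ with $A$ via $a\mapsto(a,\bar a)$ produces exactly the dot product \eqref{EqDefDotBraid}. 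This explains structurally \emph{why} a second group law appears, but the axiom-by-axiom verification you outline is still what one ultimately has to do, and the ``main obstacle'' you flag is genuine but purely mechanical.
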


To see more conceptually where the dot-product comes from, note that $r$ in particular defines a \emph{matching} \cite{Tak81,Tak03} of $(A,\circ)$ with itself by \eqref{EqBraid1},\eqref{EqBraidr2} and \eqref{EqBraidUnit}. It follows, see e.g. \cite[Proposition 6.2.15]{Maj95}, that we can endow $A\times A$ with the twisted product group structure 
\begin{equation}\label{EqDefTwistProd}
A\twistprod A = (A\times A,\circ_{\twist})
\end{equation}
with
\[
(a_1,a_2) \circ_{\twist} (b_1,b_2) = (a_1\circ (a_2\rhd b_1),(a_2 \lhd b_1)\circ b_2),\qquad e_{A\twistprod A} = (e_A,e_A),\qquad  \overline{(a,b)} = (\bar{b} \rhd \bar{a},\bar{b}\lhd \bar{a}).
\]

\begin{figure}[h]
\centering
\begin{tikzpicture}
\node at (0,-1) {$\circ_{\twist}:= $};
\begin{scope}[shift={(1,0)}]
\begin{knot}[clip width=5, clip radius=8pt]
\strand[thick](0,0)
to [out=down, in=up](0,-1)
to [out=down, in=left](0.5,-1.5);
\strand[thick](1,0)
to [out=down, in=up](2,-1)
to [out=down, in=left](2.5,-1.5);
\strand[thick](2,0)
to [out=down, in = up](1,-1)
to [out=down,in = right](0.5,-1.5);
\strand[thick](3,0)
to [out =down, in = up](3,-1)
to [out = down, in = right](2.5,-1.5);
\strand[thick](0.5,-1.5)
to [out=down,in = up](0.5,-2);
\strand[thick](2.5,-1.5)
to [out=down,in = up](2.5,-2);
\end{knot}
\end{scope}
\end{tikzpicture}
\caption{Twisted product on $A\twistprod A$}
\label{fig:TwistedProductGroupStructure}
\end{figure}

The multiplication map $m_A$ then becomes a group homomorphism,
\[
m_A: (A\times A,\circ_{\twist}) \rightarrow (A,\circ),\quad (a,b)\mapsto a\circ b.
\] 
The kernel $\Ker(m_A) \subseteq A\twistprod A$ consists of all pairs $(a,\bar{a})$, and identifying
\[
\iota: A \overset{\cong}{\rightarrow} \Ker(m_A),\quad a \mapsto (a,\bar{a}),
\]
we see that the induced product on $A$ is precisely the dot-product.

\section{Braided actions}

\begin{Def}
Let $(A,\circ,r)$ be a group with  braiding operator. We call \emph{braided action of $(A,\circ,r)$} a set $X$ together with an action of $(A,\circ)$
\[
m_X: A\times X \rightarrow X,\quad (a,x)\mapsto a\circ x
\] 
and a map
\[
k: A \times X \rightarrow A\times X
\]
such that the following relations are satisfied:
\begin{equation}\label{EqBraid2}
k (m_A\times \id_X) = (m_A\times \id_X) (\id_A \times k) (r\times \id_X)(\id_A \times k), 
\end{equation}
\begin{equation}\label{EqBraidk1}
k (\id_A\times m_X) = (\id_A\times m_X) (r\times \id_X)  (\id_A\times k) (r\times \id_X),
\end{equation}
\begin{equation}\label{EqUnitk}
k (\eta \times \id_X) =  \eta\times \id_X,
\end{equation}
\begin{equation}\label{EqTrivmXk}
m_X k = m_X.
\end{equation}
We call $(A,\circ,r)$ a \emph{generalized braided action} if, in stead of \eqref{EqTrivmXk}, the reflection equation \eqref{EqRE} is satisfied. 
\end{Def}

These above conditions can again be represented pictorially:

\begin{figure}[h]
\centering
\begin{tikzpicture}
\node at (0,-1) {$m_X = $};
\begin{scope}[shift={(1,0)}]
\begin{knot}[clip width=5, clip radius=8pt]
\strand[thick](0,0)
to [out=down, in=left](1,-1);
\strand[line width=1.8,red](2,0)
to [out = down, in = right](1,-1);
\strand[line width=1.8,red](1,-1)
to [out=down, in=up](1,-2);
\end{knot}
\end{scope}
\node at (4.8,-1) {$k = $};
\begin{scope}[shift={(5.5,0)}]
\begin{knot}[clip width=5, clip radius = 8pt]
\strand[thick](0,0)
to [out=down, in=up](1.5,-1)
to [out = down,in =up](0,-2);
\strand[line width=1.8,red](1,0)
to [out = down,in = up](1,-2);
\flipcrossings{2}
\end{knot}
\end{scope}
\end{tikzpicture}
\caption{Pictorial representation of action and action braid}
\label{fig:BraidActionStructure}
\end{figure}
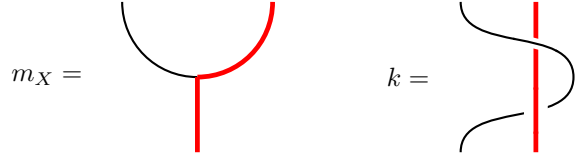

\begin{figure}[h]
\centering
\begin{tikzpicture}
\begin{scope}[shift={(0,-0.5)}]
\begin{knot}[clip width=8, clip radius=8pt]
\strand[thick](0,0)
to [out=down, in=left](0.5,-0.5);
\strand[thick](1,0)
to [out=down, in=right](0.5,-0.5);
\strand[thick](0.5,-0.5)
to [out=down, in=up](2,-1.25)
to [out=down, in=left](1.65,-1.4);
\strand[thick](1.4,-1.4)
to [out=left, in=up](0.5,-2);
\strand[line width=1.8, red](1.5,0)
to [out=down, in=up](1.5,-2);
\end{knot}
\end{scope}
\node at (2.75,-1.5) {$=$};
\begin{scope}[shift={(3.5,0)}, scale = 0.5]
\begin{knot}[clip width=5,clip radius=8pt]
\strand[thick](0,0)
to [out=down, in=up](0,-2)
to [out=down, in=up](1,-3)
to [out =down, in=up](2.5,-4)
to [out =down, in=up](1,-5)
to [out =down, in=right](0.5,-5.5);
\strand[thick](1,0)
to [out=down, in=up](2.5,-1)
to [out = down, in = right](2.2,-1.25);
\strand[thick](1.8,-1.25)
to [out= left, in=up](1,-2)
to [out =down, in = up](0,-3)
to [out = down, in = up](0,-5)
to [out = down, in = left](0.5,-5.5);
\strand[thick](0.5,-5.5)
to [out = down,in = up](0.5,-6);
\strand[line width=1.8,red](2,0)
to [out=down, in=up](2,-0.4);
\strand[line width = 1.8,red](2,-0.6)
to [out=down, in=up](2,-3)
to [out=down, in=up](2,-4)
to [out=down, in=up](2,-5)
to [out=down, in=up](2,-6);
\flipcrossings{3}
\end{knot}
\end{scope}
\begin{scope}[shift={(7,-0.25)}]
\begin{knot}[clip width=5,clip radius=8pt]
\strand[line width = 1.8, red](1.5,0)
to [out=down, in=right](1,-0.5);
\strand[thick](0.5,0)
to [out=down, in=left](1,-0.5);
\strand[thick](0,0)
to [out = down,in =up](0,-0.5)
to [out=down, in=up](1.5,-1.5)
to [out = down, in =up](0,-2.25)
to [out = down, in=up](0,-2.5);
\strand[line width = 1.8,red](1,-0.5)
to [out=down, in=up](1,-2.5);
\flipcrossings{2}
\end{knot}
\end{scope}
\node at (9.25,-1.75) {$=$};
\begin{scope}[shift={(10,0)},scale = 0.66]
\begin{knot}[clip width=5,clip radius=8pt]
\strand[thick](0,0)
to [out=down, in=up](1,-1)
to [out=down, in=up](2.5,-2)
to [out =down, in=up](1,-3)
to [out =down, in=up](0,-4)
to [out =down, in=up](0,-5);
\strand[thick](1,0)
to [out=down, in=up](0,-1)
to [out=down, in=up](0,-2)
to [out =down, in = up](0,-3)
to [out = down, in = up](1,-4)
to [out = down, in = left](1.5,-4.5);
\strand[line width=1.8pt,red](2,0)
to [out=down, in=up](2,-4)
to [out=down, in =right](1.5,-4.5);
\strand[line width=1.8pt,red](1.5,-4.5)
to [out = down, in =  up](1.5,-5);
\flipcrossings{2,4,6}
\end{knot}
\end{scope}

\end{tikzpicture}
\caption{Relations \eqref{EqBraid2} and \eqref{EqBraidk1}}
\label{fig:BraidActionRelations1}
\end{figure}

\begin{figure}[h]
\centering
\begin{tikzpicture}
\begin{scope}
\begin{knot}[clip width=5, clip radius = 8pt]
\node(A)at(0,0)[circle, draw=black, inner sep=0pt, minimum size=7pt, thick]{};
\strand[thick](A.south)
to [out=down, in=up](1.5,-1)
to [out = down,in =up](0,-2);
\strand[line width=1.8,red](1,0)
to [out = down,in = up](1,-2);
\flipcrossings{2}
\end{knot}
\end{scope}
\node at (2.75,-1) {$ = $};
\begin{scope}[shift={(3.5,0)}]
\begin{knot}[clip width=5, clip radius = 8pt]
\node(A)at(0,0)[circle, draw=black, inner sep=0pt, minimum size=7pt, thick]{};
\strand[thick](A.south)
to [out=down, in=up](0,-2);
\strand[line width=1.8,red](1,0)
to [out = down,in = up](1,-2);
\flipcrossings{2}
\end{knot}
\end{scope}
\begin{scope}[shift={(7,0)},scale = 0.66]
\begin{knot}[clip width=5, clip radius = 8pt]
\strand[thick](0,0)
to [out=down, in=up](1.5,-1)
to [out = down,in =up](0,-2)
to [out = down,in =left](0.5,-2.5);
\strand[line width=1.8,red](1,0)
to [out = down,in = up](1,-0.4);
\strand[line width=1.8,red](1,-0.65)
to [out = down,in = up](1,-2)
to [out = down, in = right](0.5,-2.5);
\strand[line width=1.8,red](0.5,-2.5)
to [out = down, in = up](0.5,-3);
\flipcrossings{1}
\end{knot}
\end{scope}
\node at (9,-1) {$ = $};
\begin{scope}[shift={(9.5,0)}]
\begin{knot}[clip width=5, clip radius=8pt]
\strand[thick](0,0)
to [out=down, in=left](1,-1);
\strand[line width=1.8,red](2,0)
to [out = down, in = right](1,-1);
\strand[line width=1.8,red](1,-1)
to [out=down, in=up](1,-2);
\end{knot}
\end{scope}
\end{tikzpicture}
\caption{Relations \eqref{EqUnitk} and \eqref{EqTrivmXk}}
\label{fig:BraidActionRelations2}
\end{figure}

\newpage

\begin{Rem}
The definition of (generalized) braided action is inspired by the notion of braided module category over a braided tensor category \cite{Bro13}, see also the earlier work \cite{tD98,tDH-O98} on cylinder twists.
\end{Rem}

\begin{Exa}
Let $(A,\circ,r)$ be a group with braiding, and put $k=r^2$. Then $(A,m_A,k)$ is a braided action for $(A,\circ,r)$, using for \eqref{EqBraid2} that $r$ satisfies the braid relation. 
\end{Exa}

\begin{Lem}\label{LemkFromBraidAction}
Let $(X,m_X,k)$ be a braided action for a group with braiding $(A,\circ,r)$. Then $k$ satisfies the reflection equation with respect to $r$.
\end{Lem}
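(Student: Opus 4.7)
The plan is to verify the reflection equation \eqref{EqRE} by showing that both sides have the same projection under two natural maps $A \times A \times X \to A \times X$, and then invoking an injectivity argument to pass from the two projections back to equality of the original triples.

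First, I would post-compose both sides of \eqref{EqRE} with $(m_A \times \id_X)$. On the left-hand side, the outermost factor $(m_A \times \id_X)(r \times \id_X)$ collapses to $(m_A \times \id_X)$ by \eqref{EqBraidCom}, and the remaining three factors then match the right-hand side of \eqref{EqBraid2}, rewriting the full composition as $k \circ (m_A \times \id_X)$. Symmetrically, on the right-hand side of \eqref{EqRE} the first four factors fit the pattern of \eqref{EqBraid2} and collapse to $k \circ (m_A \times \id_X)$, after which the trailing $(r \times \id_X)$ is absorbed by \eqref{EqBraidCom}.

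Next, I would perform the analogous projection via $(\id_A \times m_X)$. Here the roles of \eqref{EqBraid2} and \eqref{EqBraidCom} are played by \eqref{EqBraidk1} and \eqref{EqTrivmXk} respectively: on each side of \eqref{EqRE}, a pair of adjacent factors of the form $(\id_A \times m_X)(\id_A \times k)$ simplifies to $(\id_A \times m_X)$ by \eqref{EqTrivmXk}, and the four remaining factors then assemble via \eqref{EqBraidk1} into $k \circ (\id_A \times m_X)$.

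Finally, the joint map
\[
\Phi \colon A \times A \times X \to (A \times X) \times (A \times X), \qquad (p,q,y) \mapsto \bigl((p \circ q,\, y),\, (p,\, q \circ y)\bigr)
\]
is injective: $p$ is read off the second factor, then $q = \bar{p}\circ(p\circ q)$ by group cancellation in $(A,\circ)$, and $y$ appears directly in the first factor. Since the previous two steps show that $\Phi$ takes both sides of \eqref{EqRE} to the same pair $\bigl(k \circ (m_A \times \id_X),\, k \circ (\id_A \times m_X)\bigr)$, equality of the two sides of \eqref{EqRE} follows at once. The whole argument is a straightforward bookkeeping with the axioms and presents no serious obstacle; it is worth noting that the unit axiom \eqref{EqUnitk} plays no role.
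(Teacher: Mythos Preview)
Your proof is correct and follows essentially the same strategy as the paper: both argue that the two sides of the reflection equation agree after post-composition with $m_A\times\id_X$ (using \eqref{EqBraidCom} and \eqref{EqBraid2}) and with $\id_A\times m_X$ (using \eqref{EqTrivmXk} and \eqref{EqBraidk1}), and then recover the full equality from the injectivity of $(p,q,y)\mapsto((p\circ q,y),(p,q\circ y))$. The paper carries out the two computations diagrammatically rather than in symbols, and one minor point: in your second step the order in which \eqref{EqTrivmXk} and \eqref{EqBraidk1} are applied is reversed between the two sides (on the left-hand side one must first use \eqref{EqBraidk1} and then \eqref{EqTrivmXk}), but this is purely cosmetic.
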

In other words, any braided action is a generalized braided action.
\begin{proof}
We use the technique of \cite[Theorem 1]{LYZ00}. Let $(a,b,x)\in A\times A \times X$, and let $(a_1,b_1,x_1)$ and $(a_2,b_2,x_2)$ be the result of applying respectively the left hand side and the right hand side of \eqref{EqRE} to $(a,b,x)$. It is then sufficient to show that 
\[
(a_1\circ b_1,x_1) = (a_2\circ b_2,x_2),\qquad (a_1,b_1\circ x_1)= (a_2,b_2\circ x_2).
\]
These identities can be proven pictorially as follows:  

\newpage

\begin{figure}[h]
\centering
\begin{tikzpicture}
\begin{scope}[scale=0.66]
\begin{knot}[clip width=5,clip radius=8pt]
\node(A)at(0,0){$a$};
\node(B)at(1,0){$b$};
\node(X)at(2,0){$x$};
\node(A')at(0.5,-7.5){$a_1\circ b_1$};
\node(B')at(2,-7.5){$x_1$};
\strand[thick](A.south)
to [out=down, in=up](0,-2)
to [out=down, in=up](1,-3)
to [out =down, in=up](2.5,-4)
to [out =down, in=up](1,-5)
to [out =down, in=up](0,-6)
to [out = down, in = left](0.5,-6.5);
\strand[thick](B.south)
to [out=down, in=up](2.5,-1.2)
to [out = down, in = right](2.2,-1.5);
\strand[thick](1.8,-1.5)
to [out=left, in=up](1,-2)
to [out =down, in = up](0,-3)
to [out = down, in = up](0,-5)
to [out = down, in = up](1,-6)
to [out = down, in = right](0.5,-6.5);
\strand[thick](0.5,-6.5)
to [out = down, in = up](A'.north);
\strand[line width=1.8,red](X.south)
to [out=down, in=up](2,-0.7);
\strand[line width = 1.8,red](2,-0.9)
to [out=down, in=up](2,-3)
to [out=down, in=up](2,-4)
to [out=down, in=up](2,-5)
to [out=down, in=up](B'.north);
\flipcrossings{2,4,5}
\end{knot}
\end{scope}
\node at (2.5,-2) {$\underset{\eqref{EqBraidCom}}{=}$};
\begin{scope}[shift={(3.5,-0.5)}, scale = 0.66]
\begin{knot}[clip width=5,clip radius=8pt]
\strand[thick](0,0)
to [out=down, in=up](0,-2)
to [out=down, in=up](1,-3)
to [out =down, in=up](2.5,-4)
to [out =down, in=up](1,-5)
to [out =down, in=right](0.5,-5.5);
\strand[thick](1,0)
to [out=down, in=up](2.5,-1)
to [out = down, in = right](2.2,-1.25);
\strand[thick](1.8,-1.25)
to [out= left, in=up](1,-2)
to [out =down, in = up](0,-3)
to [out = down, in = up](0,-5)
to [out = down, in = left](0.5,-5.5);
\strand[thick](0.5,-5.5)
to [out = down,in = up](0.5,-6);
\strand[line width=1.8,red](2,0)
to [out=down, in=up](2,-0.4);
\strand[line width = 1.8,red](2,-0.6)
to [out=down, in=up](2,-3)
to [out=down, in=up](2,-4)
to [out=down, in=up](2,-5)
to [out=down, in=up](2,-6);
\flipcrossings{3}
\end{knot}
\end{scope}
\node at (6,-2) {$\underset{\eqref{EqBraid2}}{=}$};
\begin{scope}[shift={(6.5,-1)}]
\begin{knot}[clip width=8, clip radius=8pt]
\strand[thick](0,0)
to [out=down, in=left](0.5,-0.5);
\strand[thick](1,0)
to [out=down, in=right](0.5,-0.5);
\strand[thick](0.5,-0.5)
to [out=down, in=up](2,-1.25)
to [out=down, in=left](1.65,-1.4);
\strand[thick](1.4,-1.4)
to [out=left, in=up](0.5,-2);
\strand[line width=1.8, red](1.5,0)
to [out=down, in=up](1.5,-2);
\end{knot}
\end{scope}
\node at (9,-2) {$\underset{\eqref{EqBraidCom}}{=}$};
\begin{scope}[shift={(9.5,-0.5)}]
\begin{knot}[clip width=8, clip radius=8pt]
\strand[thick](1,0)
to [out = down, in = up](0,-1)
to [out=down, in=left](0.5,-1.5);
\strand[thick](0,0)
to [out = down,in = up](1,-1)
to [out=down, in=right](0.5,-1.5);
\strand[thick](0.5,-1.5)
to [out=down, in=up](2,-2.25)
to [out=down, in=left](1.65,-2.4);
\strand[thick](1.4,-2.4)
to [out=left, in=up](0.5,-3);
\strand[line width=1.8, red](1.5,0)
to [out=down, in=up](1.5,-3);
\flipcrossings{1}
\end{knot}
\end{scope}
\node at (11.8,-2) {$\underset{\eqref{EqBraid2}}{=}$};
\begin{scope}[shift={(12.3,0.5)}, scale = 0.66]
\begin{knot}[clip width=5,clip radius=8pt]
\node(A)at(0,0){$a$};
\node(B)at(1,0){$b$};
\node(X)at(2,0){$x$};
\node(A')at(0.5,-8){$a_2\circ b_2$};
\node(B')at(2,-8){$x_2$};
\strand[thick](B.south)
to [out = down, in = up](0,-1.5)
to [out=down, in=up](0,-3.5)
to [out=down, in=up](1,-4.5)
to [out =down, in=up](2.5,-5.5)
to [out =down, in=up](1,-6.5)
to [out =down, in=right](0.5,-7);
\strand[thick](A.south)
to [out = down, in =up](1,-1.5)
to [out=down, in=up](2.5,-2.5)
to [out = down, in = right](2.2,-2.75);
\strand[thick](1.8,-2.75)
to [out= left, in=up](1,-3.5)
to [out =down, in = up](0,-4.5)
to [out = down, in = up](0,-6.5)
to [out = down, in = left](0.5,-7);
\strand[thick](0.5,-7)
to [out = down,in = up](A'.north);
\strand[line width=1.8,red](X.south)
to [out=down, in=up](2,-1.9);
\strand[line width = 1.8,red](2,-2.1)
to [out=down, in=up](2,-2.5)
to [out=down, in=up](2,-4.9);
\strand[line width = 1.8,red](2,-5.1)
to [out=down, in=up](B'.north);
\flipcrossings{1,3}
\end{knot}
\end{scope}
\end{tikzpicture}
\end{figure}

\begin{figure}[h]
\centering
\begin{tikzpicture}
\begin{scope}[scale=0.66,shift={(0,1)}]
\begin{knot}[clip width=5,clip radius=8pt]
\node(A)at(0,0){$a$};
\node(B)at(1,0){$b$};
\node(X)at(2,0){$x$};
\node(A')at(0,-7.5){$a_1$};
\node(B')at(1.5,-7.5){$b_1\circ x_1$};
\strand[thick](A.south)
to [out=down, in=up](0,-2)
to [out=down, in=up](1,-3)
to [out =down, in=up](2.5,-4)
to [out =down, in=up](1,-5)
to [out =down, in=up](0,-6)
to [out = down, in = up](A'.north);
\strand[thick](B.south)
to [out=down, in=up](2.5,-1.2)
to [out = down, in = right](2.2,-1.5);
\strand[thick](1.8,-1.5)
to [out=left, in=up](1,-2)
to [out =down, in = up](0,-3)
to [out = down, in = up](0,-5)
to [out = down, in = up](1,-6)
to [out = down, in = left](1.5,-6.5);
\strand[line width=1.8,red](X.south)
to [out=down, in=up](2,-0.7);
\strand[line width = 1.8,red](2,-0.9)
to [out=down, in=up](2,-3)
to [out=down, in=up](2,-4)
to [out=down, in=up](2,-5)
to [out = down,in = right](1.5,-6.5);
\strand[line width = 1.8,red](1.5,-6.5)
to [out=down, in=up](B'.north);
\flipcrossings{2,4,5}
\end{knot}
\end{scope}
\node at (2.5,-2) {$\underset{\eqref{EqBraidk1}}{=}$};
\begin{scope}[shift={(3.5,-0.5)}, scale = 0.66]
\begin{knot}[clip width=5,clip radius=8pt]
\strand[thick](0.5,0)
to [out=down, in=up](2,-1)
to [out = down, in = right](1.6,-1.5);
\strand[thick](1.4,-1.5)
to [out = left,in =up](0.5,-2);
\strand[line width=1.8,red](1.5,0)
to [out = down, in = up](1.5,-0.4);
\strand[line width=1.8,red](1.5,-0.6)
to [out = down,in = up](1.5,-2);
\strand[thick](0,0)
to [out = down, in = up](0,-2);
\strand[line width = 1.8, red](1.5,-2)
to [out=down, in=right](1,-2.5);
\strand[thick](0.5,-2)
to [out=down, in=left](1,-2.5);
\strand[thick](0,-2)
to [out = down,in =up](0,-2.5)
to [out=down, in=up](1.5,-3.5)
to [out = down,in = right](1.2,-4);
\strand[thick](0.8,-4)
to [out = left, in=up](0,-4.5);
\strand[line width = 1.8,red](1,-2.5)
to [out = down, in = up](1,-2.9);
\strand[line width = 1.8,red](1,-3.2)
to [out=down, in=up](1,-4.5);
\flipcrossings{2}
\end{knot}
\end{scope}
\node at (5.5,-2) {$\underset{\eqref{EqTrivmXk}}{=}$};
\begin{scope}[shift={(6,-0.75)}]
\begin{knot}[clip width=5,clip radius=8pt]
\strand[line width = 1.8, red](1.5,0)
to [out=down, in=right](1,-0.5);
\strand[thick](0.5,0)
to [out=down, in=left](1,-0.5);
\strand[thick](0,0)
to [out = down,in =up](0,-0.5)
to [out=down, in=up](1.5,-1.5)
to [out = down, in =up](0,-2.25)
to [out = down, in=up](0,-2.5);
\strand[line width = 1.8,red](1,-0.5)
to [out=down, in=up](1,-2.5);
\flipcrossings{2}
\end{knot}
\end{scope}
\node at (8,-2) {$\underset{\eqref{EqBraidk1}}{=}$};
\begin{scope}[shift={(8.5,0)},scale=0.75]
\begin{knot}[clip width=5,clip radius=8pt]
\strand[thick](0,0)
to [out=down, in=up](1,-1)
to [out =down, in=up](2.5,-2)
to [out =down, in=up](1,-3)
to [out =down, in=up](0,-4)
to [out = down, in = up](0,-5.5);
\strand[thick](1,0)
to [out =down, in = up](0,-1)
to [out = down, in = up](0,-3)
to [out = down, in = up](1,-4)
to [out = down, in = left](1.5,-4.5);
\strand[line width=1.8,red](2,0)
to [out=down, in=up](2,-1)
to [out=down, in=up](2,-2)
to [out=down, in=up](2,-3)
to [out = down,in = right](1.5,-4.5);
\strand[line width = 1.8,red](1.5,-4.5)
to [out=down, in=up](1.5,-5.5);
\flipcrossings{2,4,5}
\end{knot}
\end{scope}
\node at (11,-2) {$\underset{\eqref{EqTrivmXk}}{=}$};
\begin{scope}[shift={(11.7,0.6)}, scale = 0.66]
\begin{knot}[clip width=5,clip radius=8pt]
\node(A)at(0,0){$a$};
\node(B)at(1,0){$b$};
\node(X)at(2,0){$x$};
\node(A')at(0,-8){$a_2$};
\node(B')at(1.5,-8){$b_2\circ x_2$};
\strand[thick](B.south)
to [out = down, in = up](0,-1.5)
to [out=down, in=up](0,-3.5)
to [out=down, in=up](1,-4.5);
\strand[thick](A.south)
to [out = down, in =up](1,-1.5)
to [out=down, in=up](2.5,-2.5)
to [out = down, in = right](2.2,-2.75);
\strand[thick](1.8,-2.75)
to [out= left, in=up](1,-3.5)
to [out = down,in =up](0,-4.5)
to [out =down, in = up](A'.north);
\strand[line width=1.8,red](X.south)
to [out=down, in=up](2,-1.9);
\strand[line width = 1.8,red](2,-2.1)
to [out=down, in=up](2,-2.5)
to [out=down, in=up](2,-6)
to [out = down, in = right](1.5,-7);
\strand[thick](1,-4.5)
to [out=down, in=up](2.5,-5.5)
to [out = down, in = right](2.2,-6);
\strand[thick](1.8,-6)
to [out = left,in =up](1,-6.5)
to [out = down, in = left](1.5,-7);
\strand[line width = 1.8,red](1.5,-7)
to [out=down, in=up](B'.north);
\flipcrossings{1,3}
\end{knot}
\end{scope}
\end{tikzpicture}
\end{figure}

\end{proof}

We will show in Section \ref{SecProofEquiv} that an action of a skew brace $(A,\circ,\cdot)$ is equivalent to a braided action of $(A,\circ,r)$ for $r$ the braiding operator associated to the skew brace structure. This shows, by means of Lemma \ref{LemkFromBraidAction}, that actions of skew braces lead to solutions of the reflection equation.

\section{Product actions}

In the following, we fix a group with braiding $(A,\circ,r)$. Given a generalized braided action $(X,m_X,k_X)$, we will construct three generalized braided actions on $A\times X$. 


\begin{Prop}\label{PropAmpAction}
Let $(X,m_X,k_X)$ be a generalized braided action of $(A,\circ,r)$. Then $Y = A\times X$ has the following generalized braided actions of $(A,\circ,r)$, all with the same action braid 
\begin{equation}\label{EqDefkamp}
k_Y = (r\times \id_X)(\id_A\times k_X)(r\times \id_X): A\times (A\times X) \rightarrow A\times (A\times X),
\end{equation}
namely:
\begin{itemize}
\item the \emph{absorbing $A$-extension}
\begin{equation}\label{EqDefTrivAct}
m_Y^{\triv} = m_A\times \id_X,
\end{equation}
\item the \emph{$k$-twisted absorbing $A$-extension}
\begin{equation}\label{EqDefMu}
m_Y^k = (m_A\times \id_X)(\id_A \times k_X)(r\times \id_X): A\times (A\times X) \rightarrow A\times X 
\end{equation}
\item the \emph{$r$-twisted $A$-amplification} of $m_X$, 
\begin{equation}\label{EqDefrTwistAct}
m_Y^r = (\id_X \times m_X)(r\times \id_X).
\end{equation}
\end{itemize}
\end{Prop}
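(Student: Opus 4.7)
The approach is diagrammatic throughout, parallel to the pictorial arguments of Section \ref{SecBraidGroup} and the proof of Lemma \ref{LemkFromBraidAction}. For each of the three proposed action maps $m_Y \in \{m_Y^{\triv}, m_Y^{k}, m_Y^{r}\}$, I need to verify that (i) $m_Y$ is an $(A,\circ)$-action on $Y = A \times X$, (ii) the pair $(m_Y, k_Y)$ satisfies \eqref{EqBraid2} and \eqref{EqBraidk1}, and (iii) $k_Y$ satisfies the unit condition \eqref{EqUnitk} and the reflection equation \eqref{EqRE}. Since $k_Y$ is common to all three cases, the verification of (iii) can be done uniformly.

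First I would establish the action properties. For $m_Y^{\triv}$, associativity is immediate from associativity of $m_A$. For $m_Y^{r}$, writing out both sides of associativity reveals that they differ by exactly one application of the matching identity \eqref{EqBraid1} for $r$, combined with associativity of $m_X$. For $m_Y^{k}$, associativity amounts to the braid axiom \eqref{EqBraid2} for $(X, m_X, k_X)$ once the definition is unfolded. The unit condition \eqref{EqUnitk} for $k_Y$ then follows by inserting $\eta$ into the first $A$-slot of $k_Y = (r\times \id_X)(\id_A\times k_X)(r\times \id_X)$: the two outer $r$'s collapse via \eqref{EqBraidUnit}, and the remaining factor reduces by the unit condition for $k_X$.

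The braid axioms \eqref{EqBraid2} and \eqref{EqBraidk1} for each $(m_Y, k_Y)$ can be checked by diagrammatic manipulation in the spirit of Figures \ref{fig:BraidActionRelations1}--\ref{fig:BraidActionRelations2}. The recurring pattern is: slide the outer $r$-strands hidden inside $k_Y$ past neighbouring copies of $r$ using the braid relation \eqref{EqBraidFund}, apply \eqref{EqBraidCom} to collapse an $r$ immediately preceding a multiplication $m_A$ (which is how the two absorbing cases swallow an extra braiding), and invoke the original braid axioms for $(X, m_X, k_X)$ to bring the two sides into a common form.

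The main obstacle is the reflection equation \eqref{EqRE} for $k_Y$ on $Y = A \times X$. Unfolding the definition of $k_Y$ produces, on either side of \eqref{EqRE}, an alternating composition of $r \times \id$ and $\id \times k_X$ factors; expanded naively this is an eight-layer diagram on $A \times A \times A \times X$. My strategy is to use the braid relation for $r$ repeatedly to shuffle the crossings on the pure $A \times A$-strands so that the $k_X$-layers and the braidings of the $X$-strand with the $A$-strands line up exactly as in the reflection equation for $k_X$, which holds by Lemma \ref{LemkFromBraidAction}. Both sides should then agree up to an identical residual $r$-pattern on the $A \times A$-factor, which cancels. This pictorial reduction requires careful tracking of crossings but uses no new ingredients beyond the braid relation for $r$ and the reflection equation for $k_X$.
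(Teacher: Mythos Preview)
Your outline is correct and matches the paper's proof in structure and in the identities invoked. The paper likewise verifies the action axioms separately (the $m_Y^k$ case needs not only \eqref{EqBraid2} for $k_X$ but also \eqref{EqBraid1} and \eqref{EqBraidr2}, so ``amounts to \eqref{EqBraid2} once unfolded'' is a slight understatement), then checks \eqref{EqBraid2} for $k_Y$ once for all three actions (since that relation involves only $k_Y$, $r$ and $m_A$, not $m_Y$), and checks \eqref{EqBraidk1} in detail for $m_Y^k$, declaring the other two cases similar. The one presentational difference is that for the reflection equation for $k_Y$ the paper writes out a short algebraic chain in leg notation $r_{12}, r_{23}, k_{34}$ rather than a diagram, but the content --- repeated braid relations plus a single use of \eqref{EqRE} for $k_X$ --- is exactly what you describe.

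One small correction: do not invoke Lemma~\ref{LemkFromBraidAction} for the reflection equation for $k_X$. That lemma assumes a \emph{braided} action (with \eqref{EqTrivmXk}) and derives \eqref{EqRE}, whereas here $(X,m_X,k_X)$ is by hypothesis a \emph{generalized} braided action, so \eqref{EqRE} for $k_X$ is already part of the assumption. The argument is unaffected; only the attribution needs fixing.
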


We will present the product actions and action braid as follows:

\begin{figure}[h]
\centering
\begin{tikzpicture}
\node at (0,-1) {$m_Y^{\triv} = $};
\begin{scope}[shift={(1,-0.5)},scale = 0.6]
\begin{knot}[clip width=5, clip radius=8pt]
\strand[thick,dotted](0,0)
to [out=down, in=left](1,-1);
\strand[line width=1.8,green](2,0)
to [out = down, in = right](1,-1);
\strand[line width=1.8,green](1,-1)
to [out=down, in=up](1,-2);
\end{knot}
\end{scope}
\node at (2.5,-1) {$:=$};
\begin{scope}[shift={(3,-0.6)}]
\begin{knot}[clip width=5,clip radius=8pt]
\strand[thick](0,0)
to [out=down, in=left](0.5,-0.5);
\strand[thick](1,0)
to [out = down, in =right](0.5,-0.5);
\strand[thick](0.5,-0.5)
to [out =down, in = up](0.5,-1);
\strand[line width=1.8,red](1.5,0)
to [out=down, in=up](1.5,-1);
\flipcrossings{3}
\end{knot}
\end{scope}

\node at (6,-1) {$m_Y^k = $};
\begin{scope}[shift={(6.7,-0.5)},scale = 0.6]
\begin{knot}[clip width=5, clip radius=8pt]
\strand[thick](0,0)
to [out=down, in=left](1,-1);
\strand[line width=1.8,green](2,0)
to [out = down, in = right](1,-1);
\strand[line width=1.8,green](1,-1)
to [out=down, in=up](1,-2);
\end{knot}
\end{scope}
\node at (8.4,-1) {$:=$};
\begin{scope}[shift={(9,0)}, scale = 0.5]
\begin{knot}[clip width=5,clip radius=8pt]
\strand[thick](0,0)
to [out=down, in=up](1,-1)
to [out =down, in=up](2.5,-2)
to [out =down, in=up](1,-3)
to [out =down, in=right](0.5,-3.5);
\strand[thick](1,0)
to [out =down, in = up](0,-1)
to [out = down, in = up](0,-3)
to [out = down, in = left](0.5,-3.5);
\strand[thick](0.5,-3.5)
to [out = down,in = up](0.5,-4);
\strand[line width=1.8,red](2,0)
to [out=down, in=up](2,-4);
\flipcrossings{3}
\end{knot}
\end{scope}
\node at (11.5,-1) {$m_X^r = $};
\begin{scope}[shift={(12.2,-0.5)},scale =0.6]
\begin{knot}[clip width=5, clip radius=8pt]
\strand[thick,dashed](0,0)
to [out=down, in=left](1,-1);
\strand[line width=1.8,green](2,0)
to [out = down, in = right](1,-1);
\strand[line width=1.8,green](1,-1)
to [out=down, in=up](1,-2);
\end{knot}
\end{scope}
\node at (13.7,-1) {$:=$};
\begin{scope}[shift={(14.2,-0.5)},scale =0.6]
\begin{knot}[clip width=5, clip radius=8pt]
\strand[thick](0,0)
to [out=down, in=up](1,-1)
to [out = down, in = left](1.5,-1.5);
\strand[thick](1,0)
to[out = down, in = up](0,-1)
to [out = down, in = up](0,-2);
\strand[line width=1.8,red](2,0)
to [out = down, in = up](2,-1)
to [out = down, in = right](1.5,-1.5) ;
\strand[line width = 1.8,red](1.5,-1.5)
to [out =down, in = up](1.5,-2);
\end{knot}
\end{scope}
\end{tikzpicture}
\caption{Pictorial representation of product actions}
\label{fig:ProdActionStructure}
\end{figure}
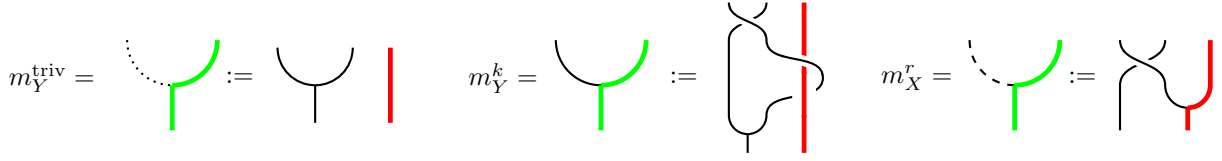

\begin{figure}[h]
\centering
\begin{tikzpicture}
\node at (7,-1) {$k_Y = $};
\begin{scope}[shift={(7.5,0)}]
\begin{knot}[clip width=5, clip radius = 8pt]
\strand[thick](0,0)
to [out=down, in=up](1.5,-1)
to [out = down,in =up](0,-2);
\strand[line width=1.8,green](1,0)
to [out = down,in = up](1,-2);
\flipcrossings{2}
\end{knot}
\end{scope}
\node at (9.8,-1) {$:= $};
\begin{scope}[shift={(10.75,0.25)},scale = 0.66]
\begin{knot}[clip width=5,clip radius=8pt]
\strand[thick](0,0)
to [out=down, in=up](1,-1)
to [out=down, in=up](2.5,-2)
to [out = down,in=right](2.2,-2.5);
\strand[thick](1.8,-2.5)
to [out =left, in=up](1,-3)
to [out =down, in=up](0,-4);
\strand[thick](1,0)
to [out=down, in=up](0,-1)
to [out=down, in=up](0,-2)
to [out =down, in = up](0,-3)
to [out = down, in = up](1,-4);
\strand[line width=1.8pt,red](2,0)
to [out = down, in = up](2,-1.4);
\strand[line width = 1.8pt,red](2,-1.6)
to [out=down, in=up](2,-4);
\flipcrossings{2,4,6}
\end{knot}
\end{scope}
\end{tikzpicture}
\caption{Pictorial representation of  product action braid}
\label{fig:ProdBraidStructure}
\end{figure}
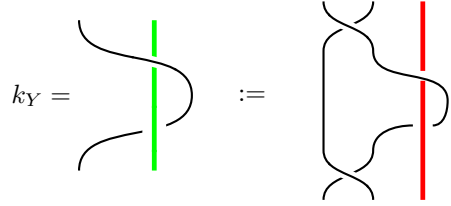

\begin{proof}

Trivially, $m_Y^{\triv}$ is an action. The action property for $m_Y^k$ follows from the following computation:

\begin{figure}[h]
\centering
\begin{tikzpicture}
\begin{scope}
\begin{knot}[clip width=5, clip radius=8pt]
\strand[thick](0,0)
to [out=down, in=left](0.5,-1);
\strand[thick](0.5,0)
to [out = down,in = left](1,-0.5);
\strand[line width = 1.8,green](1.5,0)
to [out = down, in = right](1,-0.5);
\strand[line width=1.8,green](1,-0.5)
to [out = down, in = right](0.5,-1);
\strand[line width=1.8,green](0.5,-1)
to [out=down, in=up](0.5,-2);
\end{knot}
\end{scope}
\node at (1.8,-1) {$= $};
\begin{scope}[shift={(2.6,0.9)},scale=0.7]
\begin{knot}[clip width=5, clip radius=8pt]
\strand[thick](0.5,0)
to [out=down, in=up](0.5,-2.5)
to [out = down, in =up](1.5,-3.5)
to [out = down, in = up](3,-4.5)
to [out = down, in = up](1.5,-5)
to [out = down, in = right](1,-5.5);
\strand[thick](1,-5.5)
to [out = down, in = up](1,-6);
\strand[thick](1,0)
to [out = down, in  =up](2,-1)
to [out = down, in = up](3,-1.5)
to [out = down, in =up](2,-2)
to [out = down, in = right](1.5,-2.5);
\strand[thick](2,0)
to [out = down, in = up](1,-1)
to [out = down, in = up](1,-2)
to [out = down, in = left](1.5,-2.5);
\strand[thick](1.5,-2.5)
to [out = down,in = up](0.5,-3.5)
to [out = down, in = up](0.5,-5)
to [out = down, in = left](1,-5.5);
\strand[line width=1.8,red](2.5,0)
to [out=down, in=up](2.5,-6);
\flipcrossings{3,6}
\end{knot}
\end{scope}
\node at (5.1,-1) {$\underset{\eqref{EqBraidr2}}{=}$};
\begin{scope}[shift={(5.8,0.35)},scale=0.6]
\begin{knot}[clip width=5, clip radius=8pt]
\strand[thick](0,0)
to [out=down, in=up](0,-1)
to [out = down, in =up](1,-2)
to [out = down, in = up](2,-3)
to [out = down, in = up](3,-3.5)
to [out = down, in = up](1.5,-4.2)
to [out = down, in = right, looseness=0.7](1,-4.5);
\strand[thick](1,-4.5)
to [out = down, in = up](1,-5);
\strand[thick](1,0)
to [out = down, in  =up](2,-1)
to [out = down, in = up,looseness=0.7](3,-1.5)
to [out = down, in =up](2,-2.3)
to [out = down, in = up](1,-3)
to [out = down, in = right](0.5,-3.5);
\strand[thick](0.5,-3.5)
to [out = down, in = up](0.5,-4)
to [out = down, in = left](1,-4.5);
\strand[thick](2,0)
to [out = down, in = up](1,-1)
to [out = down, in = up](0,-2)
to [out = down, in = up](0,-3)
to [out = down, in = left](0.5,-3.5);
\strand[line width=1.8,red](2.5,0)
to [out=down, in=up](2.5,-5);
\flipcrossings{7,4}
\end{knot}
\end{scope}
\node at (8.3,-1) {$= $};
\begin{scope}[shift={(9,0.85)},scale=0.75]
\begin{knot}[clip width=5, clip radius=8pt]
\strand[thick](0,0)
to [out=down, in=up](0,-1)
to [out = down, in =up](1,-2)
to [out = down, in = up](2,-3)
to [out = down, in = up](3,-3.5)
to [out = down, in = up](2,-4)
to [out = down, in = right](1.5,-4.5);
\strand[thick](1.5,-4.5)
to [out = down, in = right](1,-5);
\strand[thick](1,0)
to [out = down, in  =up](2,-1)
to [out = down, in = up](3,-1.5)
to [out = down, in =up](2,-2)
to [out = down, in = up](1,-3)
to [out = down, in = up](1,-4)
to [out = down, in = left](1.5,-4.5);
\strand[thick](1,-5)
to [out = down, in = up](1,-5.5);
\strand[thick](2,0)
to [out = down, in = up](1,-1)
to [out = down, in = up](0,-2)
to [out = down, in =up](0,-4)
to [out = down, in = up](0.5,-4.5)
to [out = down, in = left](1,-5);
\strand[line width=1.8,red](2.5,0)
to [out=down, in=up](2.5,-5.5);
\flipcrossings{7,4}
\end{knot}
\end{scope}
\end{tikzpicture}
\end{figure}

\begin{figure}[h]
\centering
\begin{tikzpicture}
\node at (0,-1.4) {$\underset{\eqref{EqBraid2}}{=} $};
\begin{scope}[shift={(0.7,0)},scale=0.65]
\begin{knot}[clip width=5, clip radius=8pt]
\strand[thick](0,0)
to [out=down, in= up](0,-1)
to [out = down, in =up](1,-2)
to [out = down,in = left](1.5,-2.5);
\strand[thick](1,0)
to [out=down, in=up](2,-1)
to [out =down, in =up](2,-2)
to [out =down, in = right](1.5,-2.5);
\strand[thick](2,0)
to [out = down, in =up](1,-1)
to [out =down, in =up](0,-2)
to [out = down,in =up](0,-3.5)
to [out = down, in = left](0.5,-4);
\strand[thick](1.5,-2.5)
to [out = down, in =up](3,-3)
to [out = down, in = up](1,-3.7)
to [out = down, in = right](0.5,-4);
\strand[thick](0.5,-4)
to [out = down, in =up](0.5,-4.5);
\strand[line width=1.8,red](2.5,0)
to [out=down, in=up](2.5,-4.5);
\flipcrossings{4}
\end{knot}
\end{scope}
\node at (3.5,-1.4) {$\underset{\eqref{EqBraid1}}{=}$};
\begin{scope}[shift={(4,-0.2)},scale=0.75]
\begin{knot}[clip width=5, clip radius=8pt]
\strand[thick](0,0)
to [out = down, in =left](0.5,-0.5);
\strand[thick](1,0)
to [out = down, in = right](0.5,-0.5);
\strand[thick](0.5,-0.5)
to [out = down, in =up](1.5,-1.5)
to [out =down, in = up](2.5,-2)
to [out = down, in =up](1.5,-2.5)
to [out = down, in = right](1,-3);
\strand[thick](1.5,0)
to [out=down, in=up](1.5,-0.5)
to [out =down, in =up](0.5,-1.5)
to [out =down, in = up](0.5,-2.5)
to [out = down, in = left](1,-3);
\strand[thick](1,-3)
to [out = down, in = up](1,-3.5);
\strand[line width=1.8,red](2,0)
to [out=down, in=up](2,-3.5);
\flipcrossings{3}
\end{knot}
\end{scope}
\node at (6.7,-1.4) {$= $};
\begin{scope}[shift={(7.2,-0.4)}]
\begin{knot}[clip width=5, clip radius=8pt]
\strand[thick](0,0)
to [out=down, in=left](0.5,-0.5);
\strand[thick](1,0)
to [out = down,in = right](0.5,-0.5);
\strand[thick](0.5,-0.5)
to [out = down, in = left](1,-1);
\strand[line width=1.8,green](1.5,0)
to [out = down, in = right](1,-1);
\strand[line width=1.8,green](1,-1)
to [out=down, in=up](1,-2);
\end{knot}
\end{scope}
\end{tikzpicture}
\end{figure}

The unit property for the action is easily checked, as is property \eqref{EqUnitk}. 

Finally, the action property for $m_Y^r$ follows immediately from the action property for $m_X$ together with \eqref{EqBraid1}.

The fact that $k_Y$ satisfies the reflection equation is easily proven algebraically: writing $k = k_X$, we have
\begin{eqnarray*} 
&& \hspace{1.4cm}(r\times \id_Y)(\id_A\times k_Y)(r\times \id_Y)(\id_A\times k_Y) \\
&=& r_{12}r_{23}k_{34}r_{23}r_{12}r_{23}k_{34}r_{23} \quad  = \quad  r_{12}r_{23}k_{34}r_{12}r_{23}r_{12}k_{34}r_{23}\\
&=& r_{12}r_{23}r_{12}k_{34}r_{23}r_{12}k_{34}r_{23}\quad = \quad  r_{23}r_{12}r_{23}k_{34}r_{23}k_{34}r_{12}r_{23}\\
&=& r_{23}r_{12}k_{34}r_{23}k_{34}r_{23}r_{12}r_{23}\quad = \quad  r_{23}r_{12}k_{34}r_{23}k_{34}r_{12}r_{23}r_{12}\\
&=& r_{23}k_{34}r_{12}r_{23}r_{12}k_{34}r_{23}r_{12}\quad  = \quad  r_{23}k_{34}r_{23}r_{12}r_{23}k_{34}r_{23}r_{12}\\
&&\hspace{1cm} =   (\id_A\times k_Y)(r\times \id_Y)(\id_A\times k_Y)(r\times \id_Y)
\end{eqnarray*}

Let us now check \eqref{EqBraid2}, which is common to all of the above actions as it only involves $k_Y$, $r$ and $m_A$. We compute
\begin{figure}[h]
\centering
\begin{tikzpicture}
\begin{scope}[shift ={(-1,0)}]
\begin{knot}[clip width=5, clip radius=8pt]
\strand[thick](0,0)
to [out=down, in=left](0.5,-0.5);
\strand[thick](1,0)
to [out = down, in = right](0.5,-0.5);
\strand[thick](0.5,-0.5)
to [out = down,in = up](2,-1)
to [out = down, in = right](1.6,-1.25);
\strand[thick](1.4,-1.25)
to [out = left, in = up](0.5,-1.5)
to [out = down,in = up](0.5,-2);
\strand[line width=1.8,green](1.5,0)
to [out = down, in = up](1.5,-0.65);
\strand[line width=1.8,green](1.5,-0.8)
to [out=down, in=up](1.5,-2);
\end{knot}
\end{scope}
\node at (1.8,-1) {$= $};
\begin{scope}[shift={(2.2,0.35)},scale=0.75]
\begin{knot}[clip width=5, clip radius=8pt]
\strand[thick](0,0)
to [out = down, in =left](0.5,-0.5);
\strand[thick](1,0)
to [out = down, in = right](0.5,-0.5);
\strand[thick](0.5,-0.5)
to [out = down, in = up](1.5,-1.5)
to [out = down, in = up](2.5,-2)
to [out = down, in = up](1.5,-2.5)
to [out = down, in = up](0.5,-3.5);
\strand[thick](1.5,0)
to [out = down, in = up](1.5,-0.5)
to [out = down, in = up](0.5,-1.5)
to [out = down, in = up](0.5,-2.5)
to [out = down, in = up](1.5,-3.5);
\strand[line width = 1.8, red](2,0)
to [out = down, in = up](2,-3.5);
\flipcrossings{2,4}
\end{knot}
\end{scope}
\node at (5.1,-1) {$\underset{\eqref{EqBraid1}}{=}$};
\begin{scope}[shift={(5.8,0.35)},scale=0.6]
\begin{knot}[clip width=5, clip radius=8pt]
\strand[thick](0,0)
to [out = down, in = up](0,-1)
to [out = down, in = up](1,-2)
to [out = down, in = left](1.5,-2.5);
\strand[thick](1,0)
to [out = down, in = up](2,-1)
to [out = down, in = up](2,-2)
to [out = down, in = right](1.5,-2.5);
\strand[thick](1.5,-2.5)
to [out = down, in = up](3,-3)
to [out = down, in = up](1,-3.5)
to [out = down, in = up](0,-4.5);
\strand[thick](2,0)
to [out = down, in = up](1,-1)
to [out = down, in = up](0,-2)
to [out = down, in = up](0,-3.5)
to [out = down, in = up](1,-4.5);
\strand[line width = 1.8, red](2.5,0)
to [out = down, in = up](2.5,-4.5);
\flipcrossings{3,5};
\end{knot}
\end{scope}
\node at (8.3,-1) {$\underset{\eqref{EqBraid2}}{=}$};
\begin{scope}[shift={(9.2,1)},scale=0.75]
\begin{knot}[clip width=5, clip radius=8pt]
\strand[thick](0,0)
to [out = down, in = up](0,-1)
to [out = down, in =up](1,-2)
to [out =down, in = up](2,-3)
to [out = down, in = up](3,-3.5)
to [out = down, in =up](2,-4)
to [out = down, in = right](1.5,-4.5);
\strand[thick](1,0)
to [out = down, in = up](2,-1)
to [out = down, in = up](3,-1.5)
to [out = down, in = up](2,-2)
to [out = down, in =up](1,-3)
to [out = down, in = up](1,-4)
to [out = down, in = left](1.5,-4.5);
\strand[thick](1.5,-4.5)
to [out = down, in =up](0,-5.5);
\strand[thick](2,0)
to [out = down, in =up](1,-1)
to [out = down, in = up](0,-2)
to [out = down, in = up](0,-4.5)
to [out = down, in =up](1.5,-5.5);
\strand[line width = 1.8, red](2.5,0)
to [out = down, in = up](2.5,-5.5);
\flipcrossings{7,4,8};
\end{knot}
\end{scope}
\end{tikzpicture}
\end{figure}

\vspace{-1cm}

\begin{figure}[h]
\centering
\begin{tikzpicture}
\node at (0,-1.4) {$\underset{\eqref{EqBraidr2}}{=}$};
\begin{scope}[shift={(0.7,1)},scale=0.65]
\begin{knot}[clip width=5, clip radius=8pt]
\strand[thick](0,0)
to [out = down, in = up](0,-2)
to [out = down, in =up](1,-3)
to [out = down, in = up](2,-4)
to [out = down, in = up](2,-5)
to [out = down, in = up](3,-5.5)
to [out = down, in = up](2,-6)
to [out = down, in = up](1,-7)
to [out = down, in =right](0.5,-7.5);
\strand[thick](1,0)
to [out = down, in = up](2,-1)
to [out = down, in =up](3,-1.5)
to [out = down, in = up](2,-2)
to [out = down, in = up](2,-3)
to [out = down, in = up](1,-4)
to [out = down, in = up](0,-5)
to [out = down, in = up](0,-7)
to [out = down, in =left](0.5,-7.5);
\strand[thick](0.5,-7.5)
to [out = down, in = up](0.5,-8);
\strand[thick](2,0)
to [out = down, in = up](1,-1)
to [out = down, in =up](1,-2)
to [out = down, in = up](0,-3)
to [out = down, in = up](0,-4)
to [out = down, in = up](1,-5)
to [out = down, in = up](1,-6)
to [out = down, in = up](2,-7)
to [out = down, in =up](2,-8);
\strand[line width = 1.8, red](2.5,0)
to [out = down, in = up](2.5,-8);
\flipcrossings{9,5,3,7};
\end{knot}
\end{scope}
\node at (3.5,-1.4) {$\underset{\eqref{EqBraidFund}}{=}$};
\begin{scope}[shift={(4.4,1.25)},scale=0.75]
\begin{knot}[clip width=5, clip radius=8pt]
\strand[thick](0,0)
to [out = down, in = up](0,-3)
to [out = down, in =up](1,-4)
to [out = down, in = up](2,-5)
to [out = down, in = up](3,-5.5)
to [out = down, in = up](2,-6)
to [out = down, in = up](1,-7)
to [out = down, in =right](0.5,-7.5);
\strand[thick](1,0)
to [out = down, in = up](2,-1)
to [out = down, in =up](3,-1.5)
to [out = down, in = up](2,-2)
to [out = down, in = up](1,-3)
to [out = down, in = up](0,-4)
to [out = down, in = up](0,-5)
to [out = down, in = up](0,-7)
to [out = down, in =left](0.5,-7.5);
\strand[thick](0.5,-7.5)
to [out = down, in = up](0.5,-8);
\strand[thick](2,0)
to [out = down, in = up](1,-1)
to [out = down, in =up](1,-2)
to [out = down, in = up](2,-3)
to [out = down, in = up](2,-4)
to [out = down, in = up](1,-5)
to [out = down, in = up](1,-6)
to [out = down, in = up](2,-7)
to [out = down, in =up](2,-8);
\strand[line width = 1.8, red](2.5,0)
to [out = down, in = up](2.5,-8);
\flipcrossings{9,5,3,7};
\end{knot}
\end{scope}
\node at (7.1,-1.4) {$= $};
\begin{scope}[shift={(7.8,-0.01)},scale = 0.6]
\begin{knot}[clip width=5,clip radius=8pt]
\strand[thick](0,0)
to [out=down, in=up](0,-2)
to [out=down, in=up](1,-3)
to [out =down, in=up](2.5,-4)
to [out =down, in=up](1,-5)
to [out =down, in=right](0.5,-5.5);
\strand[thick](1,0)
to [out=down, in=up](2.5,-1)
to [out = down, in = right](2.2,-1.25);
\strand[thick](1.8,-1.25)
to [out= left, in=up](1,-2)
to [out =down, in = up](0,-3)
to [out = down, in = up](0,-5)
to [out = down, in = left](0.5,-5.5);
\strand[thick](0.5,-5.5)
to [out = down,in = up](0.5,-6);
\strand[line width=1.8,green](2,0)
to [out=down, in=up](2,-0.4);
\strand[line width = 1.8,green](2,-0.6)
to [out=down, in=up](2,-3)
to [out=down, in=up](2,-4)
to [out=down, in=up](2,-5)
to [out=down, in=up](2,-6);
\flipcrossings{3}
\end{knot}
\end{scope}
\end{tikzpicture}
\end{figure}

We now prove \eqref{EqBraidk1} for $m_Y^k$, the other two cases being similar. We have

\begin{figure}[h]
\centering
\begin{tikzpicture}
\begin{scope}[shift ={(-2,-1)}]
\begin{knot}[clip width=5, clip radius=8pt]
\strand[thick](0,0)
to [out = down, in = up](0,-0.5)
to [out = down, in = up](1.5,-1)
to [out = down, in = right](1.2,-1.2);
\strand[thick](0.8,-1.2)
to [out = left, in = up](0,-1.5);
\strand[thick](0.5,0)
to [out = down, in = left](1,-0.5);
\strand[line width = 1.8, green](1.5,0)
to [out = down, in = right](1,-0.5);
\strand[line width = 1.8,green](1,-0.5)
to [out = down, in = up](1,-0.65);
\strand[line width = 1.8,green](1,-0.85)
to [out = down, in = up](1,-1.5);
\end{knot}
\end{scope}
\node at (0.5,-2) {$= $};
\begin{scope}[shift={(1.8,0)},scale=0.75]
\begin{knot}[clip width=5, clip radius=8pt]
\strand[thick](0,0)
to [out = down, in = up](0,-2.5)
to [out = down, in = up](1,-3.5)
to [out = down, in = up](2.5,-4)
to [out = down, in = up](1,-4.5)
to [out = down, in = up](0,-5.5);
\strand[thick](0.5,0)
to [out = down, in = up](1.5,-1)
to [out = down, in = up](2.5,-1.5)
to [out = down, in = up](1.5,-2)
to [out = down, in = right](1,-2.5);
\strand[thick](1.5,0)
to [out = down, in = up](0.5,-1)
to [out = down, in =up](0.5,-2)
to[out = down, in = left](1,-2.5);
\strand[thick](1,-2.5)
to [out = down, in = up](0,-3.5)
to [out = down, in  =up](0,-4.5)
to [out = down, in = up](1,-5.5);
\strand[line width = 1.8,red](2,0)
to [out = down, in = up](2,-5.5);
\flipcrossings{7,2,4};
\end{knot}
\end{scope}
\node at (4.5,-2) {$\underset{\eqref{EqBraidr2}}{=}$};
\begin{scope}[shift={(5.4,-0.5)},scale=0.7]
\begin{knot}[clip width=5, clip radius=8pt]
\strand[thick](0,0)
to [out = down, in = up](0,-1)
to [out = down, in = up](1,-2)
to [out = down, in = up](2,-3)
to [out = down, in = up](3,-3.5)
to [out = down, in = up](2,-4)
to [out = down, in = up](0.5,-5);
\strand[thick](1,0)
to [out = down, in = up](2,-1)
to [out = down, in = up](3,-1.5)
to [out = down, in = up](2,-2)
to [out = down, in = up](1,-3)
to [out = down, in = right](0.5,-3.5);
\strand[thick](2,0)
to [out = down, in = up](1,-1)
to [out = down, in = up](0,-2)
to [out = down, in = up](0,-3)
to [out = down, in =left](0.5,-3.5);
\strand[thick](0.5,-3.5)
to [out = down, in = up](0.5,-4)
to [out = down, in = up](2,-5);
\strand[line width = 1.8,red](2.5,0)
to [out = down, in = up](2.5,-5);
\flipcrossings{3,5,8};
\end{knot}
\end{scope}
\node at (8.3,-2) {$\underset{\eqref{EqBraid1}}{=}$};
\begin{scope}[shift={(9.2,0.4)},scale=0.75]
\begin{knot}[clip width=5, clip radius=8pt]
\strand[thick](0,0)
to [out = down, in = up](0,-1)
to [out = down, in = up](1,-2)
to [out = down, in = up](2,-3)
to [out = down, in = up](3,-3.5)
to [out = down, in = up](2,-4)
to [out = down, in = up](1,-5)
to [out = down, in = up](0,-6)
to [out = down, in = up](0,-7);
\strand[thick](1,0)
to [out = down, in = up](2,-1)
to [out = down, in = up](3,-1.5)
to [out = down, in = up](2,-2)
to [out = down, in = up](1,-3)
to [out = down, in = up](1,-4)
to [out = down, in =up](2,-5)
to [out = down, in = up](2,-6)
to [out = down, in = right](1.5,-6.5);
\strand[thick](2,0)
to [out = down, in = up](1,-1)
to [out = down, in = up](0,-2)
to [out = down, in = up](0,-5)
to [out = down, in = up](1,-6)
to [out = down, in =left](1.5,-6.5);
\strand[thick](1.5,-6.5)
to [out = down, in = up](1.5,-7);
\strand[line width = 1.8,red](2.5,0)
to [out = down, in = up](2.5,-7);
\flipcrossings{9,6,4,2};
\end{knot}
\end{scope}
\end{tikzpicture}
\end{figure}

\begin{figure}[h]
\centering
\begin{tikzpicture}
\node at (-1,-1.3) {$\underset{\eqref{EqRE}}{=}$};
\begin{scope}[shift={(-0.1,1)},scale=0.65]
\begin{knot}[clip width=5, clip radius=8pt]
\strand[thick](0,0)
to [out = down, in = up](0,-1)
to [out = down, in = up](1,-2)
to [out = down, in = up](2,-3)
to [out = down, in = up](3,-3.5)
to [out = down, in = up](2,-4)
to [out = down, in = up](1,-5)
to [out = down, in = up](0,-6)
to [out = down, in = up](0,-7);
\strand[thick](1,0)
to [out = down, in = up](2,-1)
to [out = down, in = up](2,-2)
to [out = down, in = up](1,-3)
to [out = down, in = up](1,-4)
to [out = down, in =up](2,-5)
to [out = down, in = up](3,-5.5)
to [out = down, in = up](2,-6)
to [out = down, in = right](1.5,-6.5);
\strand[thick](2,0)
to [out = down, in = up](1,-1)
to [out = down, in = up](0,-2)
to [out = down, in = up](0,-5)
to [out = down, in = up](1,-6)
to [out = down, in =left](1.5,-6.5);
\strand[thick](1.5,-6.5)
to [out = down, in = up](1.5,-7);
\strand[line width = 1.8,red](2.5,0)
to [out = down, in = up](2.5,-7);
\flipcrossings{9,6,4,2};
\end{knot}
\end{scope}
\node at (2.9,-1.3) {$\underset{\eqref{EqBraidFund}}{=}$};
\begin{scope}[shift={(4,1)},scale=0.75]
\begin{knot}[clip width=5, clip radius=8pt]
\strand[thick](0,0)
to [out = down, in = up](1,-1)
to [out = down, in = up](2,-2)
to [out = down, in = up](3,-2.5)
to [out = down, in = up](2,-3)
to [out = down, in = up](1,-4)
to [out = down, in = up](0,-5)
to [out = down, in = up](0,-6);
\strand[thick](1,0)
to [out = down, in = up](0,-1)
to [out = down, in = up](0,-2)
to [out = down, in = up](1,-3)
to [out = down, in =up](2,-4)
to [out = down, in = up](3,-4.5)
to [out = down, in = up](2,-5)
to [out = down, in = right](1.5,-5.5);
\strand[thick](2,0)
to [out = down, in =up](2,-1)
to [out = down, in = up](1,-2)
to [out = down, in = up](0,-3)
to [out = down, in = up](0,-4)
to [out = down, in = up](1,-5)
to [out = down, in =left](1.5,-5.5);
\strand[thick](1.5,-5.5)
to [out = down, in = up](1.5,-6);
\strand[line width = 1.8,red](2.5,0)
to [out = down, in = up](2.5,-6);
\flipcrossings{9,6,4,2};
\end{knot}
\end{scope}
\node at (7,-1.4) {$\underset{\eqref{EqBraidFund}}{=}$};
\begin{scope}[shift={(7.8,1.75)},scale = 0.75]
\begin{knot}[clip width=5, clip radius=8pt]
\strand[thick](0,0)
to [out = down, in = up](1,-1)
to [out = down, in = up](2,-2)
to [out = down, in = up](3,-2.5)
to [out = down, in = up](2,-3)
to [out = down, in = up](1,-4)
to [out = down, in = up](0,-5)
to [out = down, in = up](0,-8);
\strand[thick](1,0)
to [out = down, in = up](0,-1)
to [out = down, in = up](0,-4)
to [out = down, in = up](1,-5)
to [out = down, in =up](2,-6)
to [out = down, in = up](3,-6.5)
to [out = down, in = up](2,-7)
to [out = down, in = right](1.5,-7.5);
\strand[thick](2,0)
to [out = down, in =up](2,-1)
to [out = down, in = up](1,-2)
to [out = down, in = up](1,-3)
to [out = down, in = up](2,-4)
to [out = down, in = up](2,-5)
to [out = down, in = up](1,-6)
to [out = down, in =up](1,-7)
to [out = down, in =left](1.5,-7.5);
\strand[thick](1.5,-7.5)
to [out = down, in = up](1.5,-8);
\strand[line width = 1.8,red](2.5,0)
to [out = down, in = up](2.5,-8);
\flipcrossings{9,6,4,2};
\end{knot}
\end{scope}
\node at (10.6,-1.4) {$= $};
\begin{scope}[shift={(11.5,0.5)},scale = 0.75]
\begin{knot}[clip width=5,clip radius=8pt]
\strand[thick](0,0)
to [out=down, in=up](1,-1)
to [out=down, in=up](2.5,-2)
to [out =down, in=up](1,-3)
to [out =down, in=up](0,-4)
to [out =down, in=up](0,-5);
\strand[thick](1,0)
to [out=down, in=up](0,-1)
to [out=down, in=up](0,-2)
to [out =down, in = up](0,-3)
to [out = down, in = up](1,-4)
to [out = down, in = left](1.5,-4.5);
\strand[line width=1.8pt,green](2,0)
to [out=down, in=up](2,-4)
to [out=down, in =right](1.5,-4.5);
\strand[line width=1.8pt,green](1.5,-4.5)
to [out = down, in =  up](1.5,-5);
\flipcrossings{2,4,6}
\end{knot}
\end{scope}
\end{tikzpicture}
\end{figure}
\end{proof}

It follows in particular that, starting from a braided action $(X,m_X,k_X)$ of $(A,m,r)$, we obtain solutions to the reflection equation over the sets $A^{\times n}\times X$.


\begin{Def}
We say two actions $m_Y,m_Y'$ of $(A,\circ)$ on a set $Y$ \emph{braid-commute} if 
\begin{equation}\label{EqCommBraid1}
m_Y(\id_A \times m_Y') = m_Y'(\id_A\times m_Y)(r\times \id_Y). 
\end{equation}
\end{Def}
Note that the relation of braid-commuting is not symmetric if $r$ is not involutive!

\begin{figure}[h]
\centering
\begin{tikzpicture}
\node at (-2.5,-1) {$m_Y = $};
\begin{scope}[shift={(-1.5,0)}]
\begin{knot}[clip width=5, clip radius=8pt]
\strand[thick](0,0)
to [out=down, in=left](1,-1);
\strand[line width=1.8,green](2,0)
to [out = down, in = right](1,-1);
\strand[line width=1.8,green](1,-1)
to [out=down, in=up](1,-2);
\end{knot}
\end{scope}

\node at (2,-1) {$m_Y' = $};
\begin{scope}[shift={(3,0)}]
\begin{knot}[clip width=5, clip radius=8pt]
\strand[thick,style = dashed](0,0)
to [out=down, in=left](1,-1);
\strand[line width=1.8,green](2,0)
to [out = down, in = right](1,-1);
\strand[line width=1.8,green](1,-1)
to [out=down, in=up](1,-2);
\end{knot}
\end{scope}

\begin{scope}[shift = {(6.5,0)}]
\begin{knot}[clip width=5, clip radius=8pt]
\strand[thick](0,0)
to [out=down, in=left](0.5,-1);
\strand[thick, style = {dashed}](0.5,0)
to [out = down,in = left](1,-0.5);
\strand[line width = 1.8,green](1.5,0)
to [out = down, in = right](1,-0.5);
\strand[line width=1.8,green](1,-0.5)
to [out = down, in = right](0.5,-1);
\strand[line width=1.8,green](0.5,-1)
to [out=down, in=up](0.5,-2);
\end{knot}
\end{scope}
\node at (8.3,-1) {$= $};
\begin{scope}[shift = {(9.1,0.2)},scale  =0.8]
\begin{knot}[clip width=5, clip radius=8pt]
\strand[thick,style](0.5,0)
to [out = down, in = up](0,-1);
\strand[thick, style = dashed](0,-1)
to [out=down, in=left](0.5,-2);
\strand[thick](0,0)
to [out = down, in = up](0.5,-1)
to [out = down,in = left](1,-1.5);
\strand[line width = 1.8,green](1.5,0)
to [out =down, in =up](1.5,-1)
to [out = down, in = right](1,-1.5);
\strand[line width=1.8,green](1,-1.5)
to [out = down, in = right](0.5,-2);
\strand[line width=1.8,green](0.5,-2)
to [out=down, in=up](0.5,-3);
\flipcrossings{1}
\end{knot}
\end{scope}
\end{tikzpicture}
\caption{Braid-commuting actions}
\label{fig:BraidCommutingActions}
\end{figure}
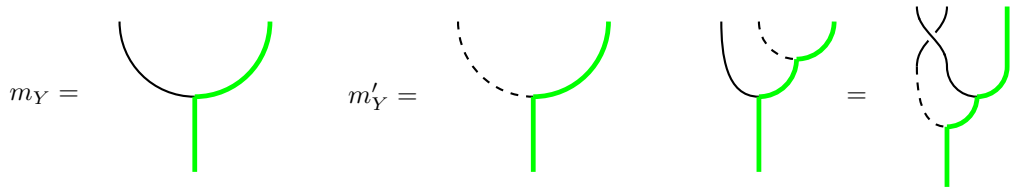

\begin{Prop}\label{PropBraidComm} 
Let $(X,m_X,k)$ be a generalized braided action. Then the pairs $(m_Y^k,m_Y^{\triv}), (m_Y^k,m_Y^r)$ and $(m_Y^r,m_Y^{\triv})$ braid-commute.
\end{Prop}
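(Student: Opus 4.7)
The plan is to verify the braid-commuting identity \eqref{EqCommBraid1} for each of the three pairs separately, using the pictorial/algebraic calculus already set up. In each case I would unfold both sides according to the definitions \eqref{EqDefTrivAct}, \eqref{EqDefMu}, \eqref{EqDefrTwistAct} of the three actions on $Y = A \times X$, then reduce to an identity of diagrams on $A \times A \times A \times X$ (or $A \times A \times A \times A \times X$ for $m_Y^k$, which carries an extra $A$-strand) that can be checked using the relations \eqref{EqBraid1}, \eqref{EqBraidr2}, \eqref{EqBraidUnit}, \eqref{EqBraidCom} of the braiding operator together with \eqref{EqBraid2}, \eqref{EqBraidk1}, \eqref{EqUnitk} for the action braid $k$, and --- in the most delicate case --- the braid relation \eqref{EqBraidFund}.

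First I would handle $(m_Y^r, m_Y^{\triv})$, which is the easiest since no $k$-strand is involved. Here \eqref{EqCommBraid1} reduces to the statement that
\[
(\id_A \times m_X)(r\times \id_X)(\id_A \times m_A \times \id_X) = (m_A \times \id_X)(\id_A \times \id_A \times m_X)(\id_A \times r \times \id_X)(r \times \id_A \times \id_X).
\]
This follows at once from \eqref{EqBraid1} applied to the $r$-crossing on the first two $A$-strands, followed by associativity of the action $m_X$.

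Next I would tackle $(m_Y^k, m_Y^{\triv})$. After expanding $m_Y^k$ on both sides, the braid-commuting identity becomes a diagrammatic equation in which an additional $m_A$-vertex sits either on top (LHS) or gets pushed below (RHS) the $r \cdot (\id_A\times k) \cdot r$ block that defines $m_Y^k$. The plan is: use \eqref{EqBraid1} (or \eqref{EqBraidr2}) to slide the $r$-crossings past the extra $m_A$, then use \eqref{EqBraid2} to pass the resulting $r$-pair across the $k$-vertex, and finally recollect the $m_A$-vertices using associativity of $(A,\circ)$. The manipulations are essentially those already visible in the proof of Proposition \ref{PropAmpAction}, where \eqref{EqBraid2} was verified for $k_Y$.

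Finally, for the hardest pair $(m_Y^k, m_Y^r)$, both actions carry $r$-crossings and $m_Y^k$ carries a $k$-strand as well. I would again expand both sides by definitions and then shepherd each diagram to a common canonical form by alternating applications of \eqref{EqBraid1}, \eqref{EqBraidr2} (to move $r$-crossings past $m_A$ and $m_X$), \eqref{EqBraidFund} (to reshuffle triples of $r$-crossings into Reidemeister-III position), and crucially \eqref{EqBraidk1} at the stage where the $k$-strand meets the $r$-crossing arising from $m_Y^r$ acting on $X$. The main obstacle I expect is precisely this coordination of $r$-moves around $k$: one has to apply \eqref{EqBraidk1} (relating $k \cdot (\id_A \times m_X)$ to $(\id_A \times m_X) \cdot r \cdot (\id_A \times k) \cdot r$) at exactly the right moment, analogous to its use in the second half of the proof of Lemma \ref{LemkFromBraidAction}. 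Once both sides are reduced to the same configuration of one $r$-crossing, one $k$-vertex, one $m_A$-vertex and one $m_X$-vertex, the identity follows.
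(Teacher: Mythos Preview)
Your overall plan---expand each side by the definitions and reduce to the group-with-braiding relations---is exactly the paper's approach, and for the pair $(m_Y^k,m_Y^r)$ you have correctly identified that \eqref{EqBraidk1}, \eqref{EqBraidFund} and \eqref{EqBraidr2} are the moves needed.

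Two small corrections. For $(m_Y^r,m_Y^{\triv})$, the crossing you need to slide is $r(\id_A\times m_A)$, so it is \eqref{EqBraidr2}, not \eqref{EqBraid1}, that applies. More importantly, for $(m_Y^k,m_Y^{\triv})$ your step ``use \eqref{EqBraid2} to pass the resulting $r$-pair across the $k$-vertex'' is both unnecessary and not what \eqref{EqBraid2} does: that relation turns a single $k$ after $m_A$ into \emph{two} $k$'s with an intervening $r$, which would take you away from the target. In fact no $k$-relation is needed in this case at all: after applying \eqref{EqBraidr2} to break up $r(a,b\circ c)$, the lone $k$ acts on the rightmost $A$-strand and the $X$-strand, which are disjoint from the extra $m_A$, so it commutes past trivially; associativity of $\circ$ (and, in the paper's version, one use of \eqref{EqBraidFund}) then finishes. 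Once you drop \eqref{EqBraid2} from that case, your outline matches the paper's proof.
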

\begin{proof}
We leave the braid-commutation of $m_Y^{r}$ and $m_Y^{\triv}$ to the reader.

The braid-commutation of $m_Y^{k}$ and $m_Y^{\triv}$ follows from 

\begin{figure}[h]
\centering
\begin{tikzpicture}
\begin{scope}[shift = {(-1.5,0)}]
\begin{knot}[clip width=5, clip radius=8pt]
\strand[thick](0,0)
to [out=down, in=left](0.5,-1);
\strand[thick, style = {dotted}](0.5,0)
to [out = down,in = left](1,-0.5);
\strand[line width = 1.8,green](1.5,0)
to [out = down, in = right](1,-0.5);
\strand[line width=1.8,green](1,-0.5)
to [out = down, in = right](0.5,-1);
\strand[line width=1.8,green](0.5,-1)
to [out=down, in=up](0.5,-2);
\end{knot}
\end{scope}
\node at (0.6,-1) {$= $};
\begin{scope}[shift={(1.8,0.25)},scale=0.75]
\begin{knot}[clip width=5, clip radius=8pt]
\strand[thick](0,0)
to [out = down, in =up](0,-0.5)
to [out =down, in = up](1,-1.5)
to [out = down, in = up](2.5,-2)
to [out = down, in =up](1.5,-2.5)
to [out = down, in = right](0.75,-3);
\strand[thick](0.5,0)
to [out =down, in =left](1,-0.5);
\strand[thick](1.5,0)
to [out = down, in = right](1,-0.5);
\strand[thick](1,-0.5)
to [out =down, in =up](0,-1.5)
to [out = down, in = up](0,-2.5)
to [out = down, in = left](0.75,-3);
\strand[thick](0.75,-3)
to [out = down, in = up](0.75,-3.5);
\strand[line width = 1.8,red](2,0)
to [out =down, in =up](2,-3.5);
\flipcrossings{2}
\end{knot}
\end{scope}
\node at (4.5,-1) {$\underset{\eqref{EqBraidr2}}{=}$};
\begin{scope}[shift={(5.4,0.25)},scale=0.7]
\begin{knot}[clip width=5, clip radius=8pt]
\strand[thick](0,0)
to [out = down, in = up](1,-1)
to [out = down, in = up](2,-2)
to [out = down, in =up](3,-2.5)
to [out =down, in =right](2.7,-2.7);
\strand[thick](2.4,-2.7)
to [out = left, in = up](2,-3)
to [out = down, in =right](1.25,-3.5);
\strand[thick](1,0)
to [out = down, in = up](0,-1)
to [out = down, in = up](0,-2)
to [out = down, in = left](0.5,-2.5);
\strand[thick](2,0)
to [out = down, in = up](2,-1)
to[out = down, in = up](1,-2)
to[out = down, in =right](0.5,-2.5);
\strand[thick](0.5,-2.5)
to [out = down, in =up](0.5,-3)
to [out =down, in = left](1.25,-3.5);
\strand[thick](1.25,-3.5)
to [out =down, in =up](1.25,-4);
\strand[line width = 1.8,red](2.5,0)
to [out =down, in = up](2.5,-2.15);
\strand[line width = 1.8,red](2.5,-2.35)
to [out =down, in =up](2.5,-4);
\end{knot}
\end{scope}
\node at (8.3,-1) {$\underset{\eqref{EqBraidFund}}{=}$};
\begin{scope}[shift={(9.2,0.5)},scale=0.75]
\begin{knot}[clip width=5, clip radius=8pt]
\strand[thick](0,0)
to [out = down, in = up](1,-1)
to [out = down, in = up](2,-2)
to [out = down, in =up](3,-2.5)
to [out = down, in = up](2,-3)
to [out = down, in =right](1.5,-3.5);
\strand[thick](1,0)
to [out = down, in = up](0,-1)
to [out = down, in = up](0,-3.5)
to [out = down, in = left](0.75,-4);
\strand[thick](2,0)
to [out = down, in = up](2,-1)
to[out = down, in = up](1,-2)
to[out = down, in =up](1,-3)
to[out = down, in = left](1.5,-3.5);
\strand[thick](1.5,-3.5)
to [out = down, in =right](0.75,-4);
\strand[thick](0.75,-4)
to [out =down, in =up](0.75,-4.5);
\strand[line width = 1.8,red](2.5,0)
to [out =down, in =up](2.5,-4.5);
\flipcrossings{4};
\end{knot}
\end{scope}
\node at (12,-1) {$=$};
\begin{scope}[shift = {(12.5,0.2)},scale  =0.8]
\begin{knot}[clip width=5, clip radius=8pt]
\strand[thick,style](0.5,0)
to [out = down, in = up](0,-1);
\strand[thick, style = dotted](0,-1)
to [out=down, in=left](0.5,-2);
\strand[thick](0,0)
to [out = down, in = up](0.5,-1)
to [out = down,in = left](1,-1.5);
\strand[line width = 1.8,green](1.5,0)
to [out =down, in =up](1.5,-1)
to [out = down, in = right](1,-1.5);
\strand[line width=1.8,green](1,-1.5)
to [out = down, in = right](0.5,-2);
\strand[line width=1.8,green](0.5,-2)
to [out=down, in=up](0.5,-3);
\flipcrossings{1};
\end{knot}
\end{scope}
\end{tikzpicture}
\end{figure}

The braid-commutation of $m_Y^{k}$ and $m_Y^r$ is proven as follows: 
\begin{figure}[h]
\centering
\begin{tikzpicture}
\begin{scope}[shift = {(-1.5,0)}]
\begin{knot}[clip width=5, clip radius=8pt]
\strand[thick](0,0)
to [out=down, in=left](0.5,-1);
\strand[thick, style = {dashed}](0.5,0)
to [out = down,in = left](1,-0.5);
\strand[line width = 1.8,green](1.5,0)
to [out = down, in = right](1,-0.5);
\strand[line width=1.8,green](1,-0.5)
to [out = down, in = right](0.5,-1);
\strand[line width=1.8,green](0.5,-1)
to [out=down, in=up](0.5,-2);
\end{knot}
\end{scope}
\node at (0.6,-1) {$= $};
\begin{scope}[shift={(1.8,0.25)},scale=0.75]
\begin{knot}[clip width=5, clip radius=8pt]
\strand[thick](0,0)
to [out = down, in =up](0,-1)
to [out =down, in = up](1,-2)
to [out = down, in = up](3,-2.5)
to [out = down, in = right](2.7,-2.7);
\strand[thick](2.3,-2.7)
to [out = left, in = up](1,-3)
to [out = down, in = right](0.5,-3.5);
\strand[thick](1,0)
to [out = down, in = up](2,-1)
to [out = down, in = up](2,-1.5)
to[out = down, in = left](2.5,-2);
\strand[thick](2,0)
to [out =down, in = up](1,-1)
to [out = down, in = up](0,-2)
to [out = down, in = up](0,-3)
to[out = down, in = left](0.5,-3.5);
\strand[thick](0.5,-3.5)
to [out = down, in = up](0.5,-4);
\strand[line width = 1.8,red](3,0)
to [out =down, in =up](3,-1.5)
to [out = down, in = right](2.5,-2);
\strand[line width = 1.8,red](2.5,-2)
to [out = down, in =up](2.5,-2.1);
\strand[line width = 1.8,red](2.5,-2.3)
to [out = down, in = up](2.5,-4);
\end{knot}
\end{scope}
\node at (4.6,-1) {$\underset{\eqref{EqBraidk1}}{=}$};
\begin{scope}[shift={(5.4,0.75)},scale=0.7]
\begin{knot}[clip width=5, clip radius=8pt]
\strand[thick](0,0)
to [out =down, in = up](0,-1)
to [out = down, in = up](1,-2)
to [out = down, in = up](2,-3)
to[out = down, in =up](3.5,-3.5)
to [out =down, in = up](2,-4)
to [out = down, in =up](1,-5)
to [out = down, in = right](0.5,-5.5);
\strand[thick](1,0)
to[out = down, in  =up](2,-1)
to [out = down, in = up](2,-2)
to [out = down, in =up](1,-3)
to[out = down, in = up](1,-4)
to[out = down, in = up](2,-5)
to[out = down, in = left](2.5,-5.5);
\strand[thick](2,0)
to[out = down, in =up](1,-1)
to[out = down, in = up](0,-2)
to[out = down, in =up](0,-5)
to[out = down, in =left](0.5,-5.5);
\strand[thick](0.5,-5.5)
to [out = down, in =up](0.5,-6);
\strand[line width = 1.8,red](3,0)
to [out = down, in = up] (3,-5)
to [out = down, in = right](2.5,-5.5);
\strand[line width = 1.8,red](2.5,-5.5)
to [out =down, in = up](2.5,-6);
\flipcrossings{2,5};
\end{knot}
\end{scope}
\end{tikzpicture}
\end{figure}

\newpage

\begin{figure}[h]
\centering
\begin{tikzpicture}
\node at (4.5,-1) {$\underset{\eqref{EqBraidFund}}{=}$};
\begin{scope}[shift={(5.4,0.75)},scale=0.7]
\begin{knot}[clip width=5, clip radius=8pt]
\strand[thick](0,0)
to [out =down, in = up](1,-1)
to [out = down, in = up](2,-2)
to[out = down, in =up](3.5,-2.5)
to [out =down, in = up](2,-3)
to [out = down, in =up](1,-4)
to [out = down, in = right](0.5,-4.5);
\strand[thick](1,0)
to[out = down, in  =up](0,-1)
to [out = down, in = up](0,-2)
to [out = down, in =up](1,-3)
to[out = down, in = up](2,-4)
to[out = down, in = left](2.5,-4.5);
\strand[thick](2,0)
to[out = down, in =up](2,-1)
to[out = down, in = up](1,-2)
to [out = down, in =up](0,-3)
to[out = down, in =up](0,-4)
to[out = down, in =left](0.5,-4.5);
\strand[thick](0.5,-4.5)
to [out = down, in =up](0.5,-5);
\strand[line width = 1.8,red](3,0)
to [out = down, in = up] (3,-4)
to [out = down, in = right](2.5,-4.5);
\strand[line width = 1.8,red](2.5,-4.5)
to [out =down, in = up](2.5,-5);
\flipcrossings{2,5};
\end{knot}
\end{scope}
\node at (8.5,-1) {$\underset{\eqref{EqBraidr2}}{=}$};
\begin{scope}[shift={(9.2,1.05)},scale=0.75]
\begin{knot}[clip width=5, clip radius=8pt]
\strand[thick](0,0)
to [out = down, in = up](1,-1)
to [out = down, in = up](2,-2)
to [out = down, in =up](3,-2.5)
to [out = down, in = up](2,-3)
to [out = down, in =right](1.5,-3.5);
\strand[thick](1,0)
to [out = down, in = up](0,-1)
to [out = down, in = up](0,-3)
to [out = down, in = up](0.5,-3.5)
to [out = down, in =up](1.5,-4.5)
to [out = down, in = left](2,-5);
\strand[thick](2,0)
to [out = down, in = up](2,-1)
to[out = down, in = up](1,-2)
to[out = down, in =up](1,-3)
to[out = down, in = left](1.5,-3.5);
\strand[thick](1.5,-3.5)
to [out = down, in =up](0.5,-4.5)
to [out = down, in =up](0.5,-5.5);
\strand[line width = 1.8,red](2.5,0)
to [out =down, in =up](2.5,-4.5)
to [out = down, in =right](2,-5);
\strand[line width = 1.8,red](2,-5)
to [out = down, in = up](2,-5.5);
\flipcrossings{4};
\end{knot}
\end{scope}
\node at (12,-1) {$= $};
\begin{scope}[shift = {(12.5,0.2)},scale  =0.8]
\begin{knot}[clip width=5, clip radius=8pt]
\strand[thick,style](0.5,0)
to [out = down, in = up](0,-1);
\strand[thick, style = dashed](0,-1)
to [out=down, in=left](0.5,-2);
\strand[thick](0,0)
to [out = down, in = up](0.5,-1)
to [out = down,in = left](1,-1.5);
\strand[line width = 1.8,green](1.5,0)
to [out =down, in =up](1.5,-1)
to [out = down, in = right](1,-1.5);
\strand[line width=1.8,green](1,-1.5)
to [out = down, in = right](0.5,-2);
\strand[line width=1.8,green](0.5,-2)
to [out=down, in=up](0.5,-3);
\flipcrossings{1};
\end{knot}
\end{scope}
\end{tikzpicture}
\end{figure}
\end{proof}

Note that the $(A,\circ)$-actions $m_Y^{\triv}$ and $m_Y^r$ can be constructed purely from the $(A,\circ)$-action $(X,m_X)$.

\begin{Lem}
There is a one-to-one correspondence between 
\begin{itemize}
\item generalized braided actions $(X,m_X,k)$ of $(A,\circ,r)$, and
\item pairs of actions $m_X,m_Y$ of $(A,m_A)$ on the respective sets $X$ and $Y = A\times X$ such that  the pairs $(m_Y,m_Y^{\triv})$ and $(m_Y,m_Y^{r})$ braid-commute. 
\end{itemize}
Moreover, the action is braided if and only if the map 
\[
m_X: Y \rightarrow X
\]
is $(A,\circ)$-equivariant.
\end{Lem}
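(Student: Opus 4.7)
The plan is to set up the correspondence via the assignment $k \mapsto m_Y^k$ of \eqref{EqDefMu}, with inverse $m_Y \mapsto k$ given by $k(a, x) := m_Y(a, e_A, x)$. For the forward direction, I would simply invoke Propositions~\ref{PropAmpAction} and~\ref{PropBraidComm}: given a generalized braided action $(X, m_X, k)$, the $k$-twisted absorbing $A$-extension $m_Y := m_Y^k$ is an $(A,\circ)$-action on $Y = A \times X$, and the pairs $(m_Y, m_Y^{\triv})$ and $(m_Y, m_Y^r)$ braid-commute as required.

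For the converse, starting from a pair $(m_X, m_Y)$ satisfying the hypotheses, I would define $k(a, x) := m_Y(a, e_A, x)$ and then extract the axioms of a generalized braided action by specialization to $y = (e_A, x) \in Y$. First, evaluating the braid-commutation of $(m_Y, m_Y^{\triv})$ on this argument and simplifying using the unit axioms \eqref{EqBraidUnit} of $r$ recovers precisely the formula \eqref{EqDefMu}, identifying $m_Y$ with $m_Y^k$. Unitality of $m_Y$ then yields \eqref{EqUnitk}; associativity of $m_Y$, specialized again at $y = (e_A, x)$, gives \eqref{EqBraid2}, which conversely implies full associativity by the computation in the proof of Proposition~\ref{PropAmpAction}; and specializing the braid-commutation of $(m_Y, m_Y^r)$ at $y = (e_A, x)$ yields \eqref{EqBraidk1}.

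The hard part will be to extract the reflection equation \eqref{EqRE} for $k$, since the naive specializations of the braid-commutations only recover \eqref{EqBraidk1}. I plan to derive \eqref{EqRE} from the combination of \eqref{EqBraid2}, \eqref{EqBraidk1}, \eqref{EqUnitk}, and the full (unspecialized) content of the braid-commutation of $m_Y$ with $m_Y^r$, via a diagrammatic argument reversing the reasoning of Lemma~\ref{LemkFromBraidAction} and Proposition~\ref{PropBraidComm}: both sides of \eqref{EqRE} should be recognizable, after manipulation with the braid relations for $r$, as two evaluations of iterated $m_Y$-actions on the triple $(a, b, x)$, which must agree by the action property of $m_Y$ together with braid-commutation against $m_Y^r$.

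For the final statement, I would use $m_Y = m_Y^k$ and \eqref{EqBraidCom} in the form $(a \rhd b) \circ (a \lhd b) = a \circ b$ to compute $m_X(m_Y(a, b, x)) = (a \rhd b) \circ m_X(k(a \lhd b, x))$; this equals $a \circ m_X(b, x) = (a \circ b) \circ x$ for all inputs if and only if $m_X \circ k = m_X$, i.e., if and only if \eqref{EqTrivmXk} holds. This is precisely the axiom distinguishing a braided from a generalized braided action, yielding the claimed equivalence.
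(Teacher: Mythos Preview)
Your overall architecture is exactly the paper's: forward direction via Propositions~\ref{PropAmpAction} and~\ref{PropBraidComm}, converse via $k(a,x):=m_Y(a,e_A,x)$, recovery of the formula $m_Y=m_Y^k$ from braid-commutation with $m_Y^{\triv}$, and then \eqref{EqUnitk}, \eqref{EqBraid2}, \eqref{EqBraidk1} from unitality, associativity, and braid-commutation with $m_Y^r$ respectively (the paper does the last step by a diagrammatic computation rather than a bare specialization, but your observation that evaluating at $(a,b,e_A,x)$ already yields \eqref{EqBraidk1} is correct and matches Remark~\ref{RemEve}). Your treatment of the final equivariance statement is also the same as the paper's.

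Where you differ is in singling out \eqref{EqRE} as the remaining obligation. The paper's proof simply lists \eqref{EqBraid2}, \eqref{EqBraidk1}, \eqref{EqUnitk} as what must be shown and does not verify \eqref{EqRE} separately, so you are being more careful here. However, your proposed strategy for \eqref{EqRE} is unlikely to succeed as stated: by the computations in Proposition~\ref{PropBraidComm}, the braid-commutation of $m_Y^k$ with $m_Y^{\triv}$ uses no $k$-axiom at all, and the braid-commutation with $m_Y^r$ follows from \eqref{EqBraidk1} alone (together with the braiding-operator axioms for $r$). Hence the ``full unspecialized'' braid-commutation hypotheses carry no information beyond \eqref{EqBraid2}, \eqref{EqBraidk1}, \eqref{EqUnitk}, and cannot supply extra leverage toward \eqref{EqRE}. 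If \eqref{EqRE} is to be derived in this setting, it must come directly from those three identities and the group-with-braiding structure of $(A,\circ,r)$; your plan to ``reverse'' Lemma~\ref{LemkFromBraidAction} will not work either, since that argument uses \eqref{EqTrivmXk}, which is unavailable in the generalized case. You should either supply such a direct derivation or note that the paper leaves this point tacit.
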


\begin{proof}
If $(X,m_X,k)$ is a generalized braided action, we have shown in Proposition \ref{PropBraidComm} that the pairs $(m_Y^k,m_Y^{\triv})$ and $(m_Y^k,m_Y^{r})$ braid-commute. If moreover $(X,m_X,k)$ is a braided action, it follows straightforwardly from \eqref{EqBraidCom} and \eqref{EqTrivmXk}, together with the action property of $m_X$, that $m_X$ is an equivariant map from $(Y,m_X^{k})$ to $(X,m_X)$.

Conversely, assume that $(X,\circ)$ is an action of $(A,\circ)$, and that we have an action $m_Y$ of $A$ on $A\times X$, 
\[
m_Y(a,(b,x)) = a*(b,x),
\] 
satisfying the braid-commutation \eqref{EqCommBraid1} with $m_Y^{\triv}$ and $m_Y^r$. Define
\[
k: A\times X \rightarrow A\times X,\quad (a,x) \mapsto k(a,x) := a *(e_A,x).
\]
Let us write 
\[
a\circ (b,x) = (a\circ b,x)
\]
for the action $m_Y^{\triv}$. Take $a,b\in A$ and write $(b',a') = r(a,b)$. Then from the braid-commutation of $m_Y$ with $m_Y^{\triv}$, we obtain that
\[
a*(b,x) = a*(b\circ (e_A,x)) = b' \circ (a'*(e_A,x)) = b'\circ k(a',x),
\]
so that $m_Y$ is determined by the formula \eqref{EqDefMu} with $k_X$ replaced by $k$. We may hence write $k$ and $m_Y$ diagramatically as in resp. Figure \ref{fig:BraidActionStructure} and Figure \ref{fig:ProdBraidStructure}, and are to show that the associated $k$-map satisfies \eqref{EqBraid2}, \eqref{EqBraidk1}, \eqref{EqUnitk}. 

In fact, \eqref{EqBraid2} is immediate upon applying the action equation $m_Y(m_A\times \id_Y) = m_Y(\id_A\times m_Y)$ to $(a,b,e_A,x)$, and similarly \eqref{EqUnitk} follows from $e_A*(e_A,x) = (e_A,x)$. The relation \eqref{EqBraidk1} follows from the following computation, where in the penultimate step we use that $m_Y$ and $m_Y^r$ braid-commute:

\begin{figure}[h]
\centering
\begin{tikzpicture}
\begin{scope}[shift ={(-2,-0.5)}]
\begin{knot}[clip width=5, clip radius=8pt]
\strand[thick](0,0)
to [out = down, in = up](1,-1)
to [out = down, in = up](2,-1.5)
to [out = down, in = up](1,-2)
to [out = down, in = up](0,-3);
\strand[thick](1,0)
to [out = down, in = up](0,-1)
to[out = down, in = up](0,-2)
to [out = down, in = up](1,-3);
\strand[line width = 1.8, red](1.5,0)
to [out = down, in = up](1.5,-3);
\flipcrossings{2,4}
\end{knot}
\end{scope}
\node at (0.7,-2) {$= $};
\begin{scope}[shift={(1.8,0)},scale=0.75]
\begin{knot}[clip width=5,clip radius=8pt]
\node(A)at(1,0)[circle, draw=black, inner sep=0pt, minimum size=7pt, thick]{};
\strand[thick](0,0)
to [out= down, in = up](1,-1)
to[out = down, in = up](2,-2)
to [out= down, in = up](3.5,-2.5)
to [out = down, in =up](2,-3)
to [out = down, in = up](1,-4)
to [out = down, in =right](0.5,-4.5);
\strand[thick](A.south)
to [out=down, in=up](0,-1)
to [out = down, in =up](0,-4)
to [out = down, in = left](0.5,-4.5);
\strand[thick](0.5,-4.5)
to [out= down, in =up](0.5,-5);
\strand[thick](2,0)
to [out = down, in = up](2,-1)
to [out = down, in =up](1,-2)
to[out = down, in = up](1,-3)
to [out = down, in = up](2,-4)
to [out = down, in = left](2.5,-4.5);
\strand[line width =1.8,red](3,0)
to [out = down, in= up](3,-4)
to [out = down, in =right](2.5,-4.5);
\strand[line width = 1.8, red](2.5,-4.5)
to [out =down, in = up](2.5,-5);
\flipcrossings{3,5};
\end{knot}
\end{scope}
\node at (4.8,-2) {$= $};
\begin{scope}[shift={(5.4,0.2)},scale=0.75]
\begin{knot}[clip width=5,clip radius=8pt]
\node(A)at(1,0)[circle, draw=black, inner sep=0pt, minimum size=7pt, thick]{};
\strand[thick](0,0)
to [out= down, in = up](1,-1)
to[out = down, in = up](2,-2)
to [out = down, in = up](2,-2.5)
to [out= down, in = up](3.5,-3)
to [out = down, in =up](2,-3.5)
to [out =down, in =up](2,-4)
to [out = down, in = up](1,-4.5)
to [out = down, in =right](0.5,-5.5);
\strand[thick](A.south)
to [out=down, in=up](0,-1)
to [out = down, in = up](0,-2)
to [out = down, in =up](1,-3)
to [out = down, in = up](0,-4)
to [out = down, in =up](0,-5)
to [out = down, in = left](0.5,-5.5);
\strand[thick](0.5,-5.5)
to [out= down, in =up](0.5,-6);
\strand[thick](2,0)
to [out = down, in = up](2,-1)
to [out = down, in =up](1,-2)
to [out = down, in = up](0,-3)
to[out = down, in = up](1,-4)
to [out = down, in = up](2,-5)
to [out = down, in = left](2.5,-5.5);
\strand[line width =1.8,red](3,0)
to [out = down, in= up](3,-5)
to [out = down, in =right](2.5,-5.5);
\strand[line width = 1.8, red](2.5,-5.5)
to [out =down, in = up](2.5,-6);
\flipcrossings{3,5,6,7};
\end{knot}
\end{scope}
\node at (8.6,-2) {$\underset{\eqref{EqBraidr2}}{\underset{\eqref{EqBraidFund}}{=}}$};
\begin{scope}[shift={(9.2,0.4)},scale=0.75]
\begin{knot}[clip width=5,clip radius=8pt]
\node(A)at(1,0)[circle, draw=black, inner sep=0pt, minimum size=7pt, thick]{};
\strand[thick](0,0)
to [out= down, in = up](0,-1)
to[out = down, in = up](1,-2)
to[out = down, in = up](2,-3)
to [out= down, in = up](3,-3.5)
to [out = down, in =up](2,-4)
to [out = down, in =right](1.5,-4.5);
\strand[thick](A.south)
to [out=down, in=up](2,-1)
to [out = down, in = up](2,-2)
to [out = down, in =up](1,-3)
to [out = down, in =up](1,-4)
to [out = down, in =left](1.5,-4.5);
\strand[thick](1.5,-4.5)
to [out= down, in =up](0.5,-5.5)
to [out = down, in = up](0.5,-6.5);
\strand[thick](2,0)
to [out = down, in = up](1,-1)
to [out = down, in =up](0,-2)
to [out = down, in = up](0,-4)
to[out = down, in = up](0.5,-4.5)
to [out = down, in = up](1.5,-5.5)
to [out = down, in = left](2,-6);
\strand[line width =1.8,red](2.5,0)
to [out = down, in= up](2.5,-5.5)
to [out = down, in =right](2,-6);
\strand[line width = 1.8, red](2,-6)
to [out =down, in = up](2,-6.5);
\flipcrossings{4,5,6,7};
\end{knot}
\end{scope}
\end{tikzpicture}
\end{figure}
\begin{figure}[h]
\centering
\begin{tikzpicture}
\node at (-1,-0.4) {$= $};
\begin{scope}[shift={(-0.1,1)},scale=0.65]
\begin{knot}[clip width=5,clip radius=8pt]
\node(A)at(1,0)[circle, draw=black, inner sep=0pt, minimum size=7pt, thick]{};
\strand[thick](0,0)
to [out= down, in = up](0,-2)
to[out = down, in = up](1,-3)
to[out = down, in = up](3,-3.5)
to [out= down, in = up](1,-4)
to [out = down, in =right](0.5,-4.5);
\strand[thick](A.south)
to [out = down, in = left](1.4,-0.4);
\strand[thick](1.6,-0.6)
to [out=down, in=up, looseness =0.7](2,-1)
to [out = down, in = up](1,-2)
to [out = down, in =up](0,-3)
to [out = down, in =up](0,-4)
to [out = down, in =left](0.5,-4.5);
\strand[thick](0.5,-4.5)
to [out = down, in = up](0.5,-5);
\strand[thick](2,0)
to [out= down, in =up](1,-1)
to [out = down, in = up](2,-2)
to [out =down, in = left](2.5,-2.5);
\strand[line width =1.8,red](3,0)
to [out = down, in =up](3,-2)
to [out =down, in =right](2.5,-2.5);
\strand[line width = 1.8, red](2.5,-2.5)
to [out = down, in =up](2.5,-3.1);
\strand[line width = 1.8, red](2.5,-3.3)
to [out =down, in = up](2.5,-5);
\flipcrossings{2,4,5,6,7};
\end{knot}
\end{scope}
\node at (2.9,-0.4) {$= $};
\begin{scope}[shift={(4,0.5)},scale=0.9]
\begin{knot}[clip width=5,clip radius=8pt]
\strand[line width = 1.8, red](1.5,0)
to [out=down, in=right](1,-0.5);
\strand[thick](0.5,0)
to [out=down, in=left](1,-0.5);
\strand[thick](0,0)
to [out = down,in =up](0,-0.5)
to [out=down, in=up](1.5,-1.5)
to [out = down, in =up](0,-2.25)
to [out = down, in=up](0,-2.5);
\strand[line width = 1.8,red](1,-0.5)
to [out=down, in=up](1,-2.5);
\flipcrossings{2}
\end{knot}
\end{scope}
\end{tikzpicture}
\end{figure}

Finally, if  $m_X$ intertwines $*$ and $\circ$, it is easily seen that \eqref{EqTrivmXk} holds.
\end{proof}

\begin{Rem}\label{RemEve}
Note that the braid-commutation of $m_Y^{k}$ and $m_Y^r$ at $A\times A \times e_A \times X$ already implies the $m_X$-braid relation, which then in turn implies the full braid-commutation relation for $m_Y^k$ and $m_Y^r$ under the presence of the other conditions. 
\end{Rem}

\section{Equivalence between actions of skew braces and braided actions of groups with braiding}\label{SecProofEquiv}

Fix a skew brace $(A,\circ,\cdot)$, and let $r$ be the associated braiding operator given by Theorem \ref{TheoEquivBraidGroupBrace}. We write in the following
\[
r(a,b) = (a\rhd b,a\lhd b).
\]
Recall also the notations $\lambda,\rho$ introduced in \eqref{EqDefLR}.

\begin{Lem}
The following identities hold:
\begin{equation}\label{EqIdLR}
a \rhd b = \lambda_a(b) = a^{-1}\cdot (a\circ b),\qquad a\lhd b = \overline{\rho_{\bar{b}}(\bar{a})} = \overline{(\bar{b}\circ \bar{a})\cdot \bar{b}^{-1}} = \overline{\bar{a}\cdot b}\circ b.
\end{equation}
\end{Lem}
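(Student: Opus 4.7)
The plan is to unwind the three claimed identities one by one, using only the definition of $\lambda,\rho$ from \eqref{EqDefLR}, the distributivity axiom defining a skew brace, and the translation between $\cdot$ and $r$ given in Theorem~\ref{TheoEquivBraidGroupBrace}.

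The first identity $a\rhd b=\lambda_a(b)=a^{-1}\cdot(a\circ b)$ is essentially a restatement: the rightmost equality is the definition \eqref{EqDefLR} of $\lambda$, while the leftmost is exactly \eqref{EqDefDotBraidEquiv}. So nothing needs to be proved there.

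For the chain concerning $a\lhd b$, I would first dispose of the middle equality $\overline{\rho_{\bar b}(\bar a)}=\overline{(\bar b\circ\bar a)\cdot\bar b^{-1}}$, which is merely the definition of $\rho$ applied at the pair $(\bar b,\bar a)$, followed by taking $\circ$-inverses. Next I would establish the last equality $a\lhd b=\overline{\bar a\cdot b}\circ b$. For this I would exploit the braided group axiom $m_A\circ r=m_A$ from \eqref{EqBraidCom}, which reads $(a\rhd b)\circ(a\lhd b)=a\circ b$ and hence gives $a\lhd b=\overline{a\rhd b}\circ a\circ b$. Combining this with \eqref{EqDefDotBraid} in the form $\bar a\cdot b=\bar a\circ(a\rhd b)$, one obtains $\overline{\bar a\cdot b}=\overline{a\rhd b}\circ a$, and multiplying by $b$ on the right recovers $\overline{\bar a\cdot b}\circ b=a\lhd b$.

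It then remains to reconcile the two expressions $\overline{\rho_{\bar b}(\bar a)}$ and $\overline{\bar a\cdot b}\circ b$. I would do this by showing the equivalent statement $\bar b\circ(\bar a\cdot b)=\rho_{\bar b}(\bar a)$. Here the distributive law
\[
\bar b\circ(\bar a\cdot b)=(\bar b\circ\bar a)\cdot\bar b^{-1}\cdot(\bar b\circ b)
\]
collapses because $\bar b\circ b=e_A$, leaving exactly $(\bar b\circ\bar a)\cdot\bar b^{-1}=\rho_{\bar b}(\bar a)$. Rearranging gives $\overline{\bar a\cdot b}\circ b=\overline{\rho_{\bar b}(\bar a)}$, closing the cycle.

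No step is truly an obstacle: everything reduces to one application of \eqref{EqBraidCom} and one application of the distributivity law, so the only care required is consistent bookkeeping between $\circ$-inverses $\bar a$ and $\cdot$-inverses $a^{-1}$.
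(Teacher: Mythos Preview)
Your proposal is correct and follows essentially the same argument as the paper: both use \eqref{EqBraidCom} to isolate $a\lhd b$, the identity $\bar a\cdot b=\bar a\circ(a\rhd b)$ from \eqref{EqDefDotBraid} to obtain $\overline{a\lhd b}=\bar b\circ(\bar a\cdot b)$, and one application of distributivity to identify this with $\rho_{\bar b}(\bar a)$. The only difference is cosmetic ordering---the paper establishes the $\rho$-form first and then notes the last expression follows from the skew brace relation, whereas you establish the last expression first and then close the loop---but the computations are identical.
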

\begin{proof}
The identity $a \rhd b =\lambda_a(b)$ is part of the statement of Theorem \ref{TheoEquivBraidGroupBrace}. On the other hand, $a\lhd b$ is determined by \eqref{EqBraidCom},
\begin{equation}\label{EqDefIdrhd}
(a\rhd b)\circ (a\lhd b) = a\circ b.
\end{equation}
Now using 
\[
\lambda_a(b) = a^{-1}\cdot (a\circ b) = a\circ (\bar{a}\cdot b),
\]
we see that \eqref{EqDefIdrhd} becomes
\[
(\bar{a}\cdot b)\circ (a\lhd b) = b,
\]
and so
\[
\overline{a\lhd b} = \bar{b} \circ (\bar{a}\cdot b) = (\bar{b}\circ \bar{a})\cdot \bar{b}^{-1} = \rho_{\bar{b}}(\bar{a}).
\]
The equivalent expression for $a\lhd b$ follows from applying the fundamental skew brace relation. 
\end{proof}


Recall now from \eqref{EqDefTwistProd} the twisted Cartesian product group $A\twistprod A$.  We leave the proof of the following easy lemma to the reader.

\begin{Lem}\label{LemTwistProd}
Let $Y$ be a set. There is a one-to-one correspondence between 
\begin{enumerate}
\item braid-commuting actions $m_Y$ and $m_Y'$ of $(A,\circ)$ on $Y$, and
\item actions $\theta_Y$ of $A\twistprod A$ on $Y$,
\end{enumerate}
the correspondence being given by 
\[
\theta_Y = m_Y'(\id_A\times m_Y).
\]
\end{Lem}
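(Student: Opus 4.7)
The plan is to verify that the map $\Phi : (m_Y, m_Y') \mapsto m_Y'(\id_A\times m_Y)$ is a bijection by producing an explicit inverse and then matching the two associativity/compatibility axioms on each side. First I would exploit the unit axioms \eqref{EqBraidUnit} of the braid operator to check that $A\times\{e_A\}$ and $\{e_A\}\times A$ are subgroups of $A\twistprod A$, each isomorphic to $(A,\circ)$. Concretely, since $a\rhd e_A = e_A$, $a\lhd e_A = a$, $e_A\rhd b = b$, and $e_A\lhd b = e_A$, the twisted product reduces to
\[
(a,e_A)\circ_{\twist}(b,e_A) = (a\circ b,e_A), \qquad (e_A,a)\circ_{\twist}(e_A,b) = (e_A,a\circ b),
\]
and moreover every element factors uniquely as $(a,b) = (a,e_A)\circ_{\twist}(e_A,b)$.

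Given an action $\theta_Y$ of $A\twistprod A$, I would then define $m_Y(a,y) := \theta_Y((e_A,a),y)$ and $m_Y'(a,y):= \theta_Y((a,e_A),y)$; by the first step these are $(A,\circ)$-actions, and the second step forces $\theta_Y((a,b),y) = m_Y'(a,m_Y(b,y))$, giving a candidate inverse to $\Phi$. The non-trivial identity is the cross-relation obtained by computing
\[
(e_A,a)\circ_{\twist}(b,e_A) = (a\rhd b,\,a\lhd b),
\]
so that associativity of $\theta_Y$ applied to this product yields
\[
m_Y(a,m_Y'(b,y)) = m_Y'(a\rhd b,m_Y(a\lhd b,y)),
\]
which is precisely the braid-commutation relation \eqref{EqCommBraid1}. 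Conversely, starting from braid-commuting $(m_Y,m_Y')$, associativity of $\theta_Y := m_Y'(\id_A\times m_Y)$ is a one-line unwinding: expanding $(a_1,a_2)\circ_{\twist}(b_1,b_2)$ inside $\theta_Y$, one application of braid-commutation moves $a_2$ past $b_1$, and associativity of $m_Y$ and $m_Y'$ separately collapses the result to $m_Y'(a_1,\theta_Y((a_2,e_A),\theta_Y((e_A,b_1),m_Y(b_2,y))))$.

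I do not expect any serious obstacle: the only thing to be careful about is that the three elementary checks (the two subgroup reductions and the single cross-relation) together suffice, which is ensured by the factorization $(a,b) = (a,e_A)\circ_{\twist}(e_A,b)$. Unit preservation is immediate from $\theta_Y((e_A,e_A),y) = y$ on one side and from $m_Y'(e_A,m_Y(e_A,y)) = y$ on the other.
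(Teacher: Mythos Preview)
Your proof is correct. The paper actually omits the proof entirely, stating only ``We leave the proof of the following easy lemma to the reader,'' so your write-up supplies exactly the routine verification the author had in mind: use the unit axioms \eqref{EqBraidUnit} to see that $A\times\{e_A\}$ and $\{e_A\}\times A$ are copies of $(A,\circ)$ inside $A\twistprod A$, factor $(a,b)=(a,e_A)\circ_{\twist}(e_A,b)$, and observe that the single cross-relation $(e_A,a)\circ_{\twist}(b,e_A)=(a\rhd b,a\lhd b)$ is equivalent to the braid-commutation identity \eqref{EqCommBraid1}.

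One small quibble: in your converse direction, the displayed expression you land on after ``collapses the result to'' is not quite the right target; after one use of braid-commutation and associativity of $m_Y,m_Y'$ you arrive directly at
\[
m_Y'\bigl(a_1\circ(a_2\rhd b_1),\,m_Y((a_2\lhd b_1)\circ b_2,\,y)\bigr) = \theta_Y\bigl((a_1,a_2)\circ_{\twist}(b_1,b_2),\,y\bigr),
\]
which is what you want. The argument itself is fine.
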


Recall from the discussion at the end of Section \ref{SecBraidGroup} that we have a short exact sequence 
\[
\{e_A\} \rightarrow (A,\cdot) \overset{\iota}{\rightarrow} (A\twistprod A,\circ_{\twist}) \overset{m_A}{\rightarrow} (A,\circ) \rightarrow \{e_A\},
\]
where $\iota(a) = (a,\bar{a})$. In particular, $\iota(A)$ is a normal subgroup of $A\twistprod A$. On the other hand, we also have the group imbedding
\[
j_1: A \rightarrow A\twistprod A,\qquad a \mapsto (a,e_A).
\]
Since 
\[
j_1(a)\circ_{\twist}\iota(b) = (a\circ b,\bar{b}),
\]
the map 
\[
A\times A \rightarrow A\times A,\quad (a,b)\mapsto j_1(a)\circ_{\twist} \iota(b)
\]
is surjective. An easy computation shows that 
\begin{equation}\label{EqInterchange}
j_1(b)\circ_{\twist} \iota(a) = \iota(\rho_b(a))\circ_{\twist} j_1(b),
\end{equation}
and we obtain in this way a group isomorphism
\begin{equation}\label{EqCrossProd}
(A,\circ) \ltimes_{\gamma} (A,\cdot) \cong A\twistprod A,\quad (a,b) \mapsto j_1(a)\circ_{\twist} \iota(b).
\end{equation}

\begin{Lem}\label{PropCCpi}
Let $(X,m_X)$ be an action of $(A,\circ)$. There is a one-to-one correspondence between
\begin{itemize}
\item actions 
\[
m_Y: A\times (A\times X)\rightarrow A\times X,\qquad (a,b,x) \mapsto a*(b,x)
\] 
of $(A,\circ)$ on $Y = A\times X$ which braid-commute with $m_Y^{\triv}$ and for which $m_X: Y \rightarrow X$ is $(A,\circ)$-equivariant, and 
\item maps
\[
\beta: A\times X \rightarrow A,\quad (a,x)\mapsto \beta_x(a)
\]
such that, for each $x\in X$, the map
\begin{equation}\label{EqDefrhox}
\gamma_x:A\times A \rightarrow A,\quad (a,b)\mapsto a\cdot_x b := b\circ \beta_{\bar{b}\circ x} (\rho_{\bar{b}}(a))
\end{equation}
defines an action of $(A,\cdot)$ on $A$, 
\begin{equation}\label{EqActProprhox}
(a\cdot b)\cdot_xc = a\cdot_x(b\cdot_x c),\quad e_A\cdot_x a = a,
\end{equation}
\end{itemize}
The correspondence is completely determined by 
\begin{equation}\label{EqDetermineEquiv}
a*(b,\bar{b}\circ x) = (a\circ (\bar{a}\cdot_x b),-).
\end{equation}
\end{Lem}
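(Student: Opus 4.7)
The plan is to use Lemma \ref{LemTwistProd} to recast the braid-commuting pair $(m_Y,m_Y^{\triv})$ as a single action $\theta_Y := m_Y^{\triv}(\id_A\times m_Y)$ of $A\twistprod A$ on $Y$, and then to exploit the isomorphism \eqref{EqCrossProd} to split $\theta_Y$ into its $j_1(A)$-component---which is forced by construction to be $m_Y^{\triv}$ itself---and its $\iota(A)$-component, an action $\phi:(A,\cdot)\to\Sym(Y)$ given by $\phi(a) := \theta_Y(\iota(a),-) = m_Y^{\triv}(a,m_Y(\bar a,-))$. The semidirect product structure becomes the single compatibility relation
\[
\phi(a)\circ m_Y^{\triv}(b,-) = m_Y^{\triv}(b,-)\circ\phi(\rho_{\bar b}(a))
\]
read off from \eqref{EqInterchange}; conversely any such $\phi$ reassembles with $m_Y^{\triv}$ into an $A\twistprod A$-action. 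One recovers $m_Y$ from $\phi$ via $m_Y(b,y) = m_Y^{\triv}(b,\phi(\bar b)(y))$.

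Since $m_X$ is tautologically $m_Y^{\triv}$-equivariant, this recovery formula shows that $m_Y$ is $m_X$-equivariant if and only if $\phi$ preserves each fibre $F_y := m_X^{-1}(y) = \{(c,\bar c\circ y):c\in A\}$. Parametrising $F_y$ by $A$ via $c\mapsto(c,\bar c\circ y)$, the restriction $\phi(a)|_{F_y}$ becomes a map $g_{a,y}:A\to A$, and the $(A,\cdot)$-action property of $\phi$ amounts precisely to the statement that for each $y$ the family $(g_{a,y})_{a\in A}$ is an $(A,\cdot)$-action on $A$. Transcribing the twisted equivariance with $m_Y^{\triv}$ into these $A$-coordinates yields the interchange identity
\[
g_{a,z}(b\circ c) = b\circ g_{\rho_{\bar b}(a),\bar b\circ z}(c),
\]
and specialising $c=e_A$ together with the definition $\beta_z(a) := g_{a,z}(e_A)$ recovers precisely the formula \eqref{EqDefrhox} for $\gamma_z(a,b) = g_{a,z}(b)$.

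Conversely, given $\beta:A\times X\to A$ such that each $\gamma_x$ is an $(A,\cdot)$-action on $A$, one \emph{defines} $g_{a,y}(b) := \gamma_y(a,b)$; the interchange identity above then holds automatically, as a short computation using the definitional formula for $\gamma_y$ together with $\rho_{\bar c\circ\bar b} = \rho_{\bar c}\rho_{\bar b}$. This assembles into a fibre-preserving $\phi$ satisfying the twisted equivariance, and the first paragraph runs in reverse to produce a braid-commuting $(m_Y,m_Y^{\triv})$ with $m_X$-equivariant $m_Y$. Finally, \eqref{EqDetermineEquiv} is the direct computation
\[
a*(b,\bar b\circ x) = m_Y^{\triv}(a,\phi(\bar a)(b,\bar b\circ x)) = (a\circ g_{\bar a,x}(b),\,-) = (a\circ(\bar a\cdot_x b),\,-),
\]
using that $\phi(\bar a)$ acts on $F_x$ via $g_{\bar a,x}$. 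The most delicate step is the automatic verification of the interchange identity in the converse direction; everything else is either bookkeeping or direct consequence of Lemma \ref{LemTwistProd}.
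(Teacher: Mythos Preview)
Your proposal is correct and follows essentially the same route as the paper's own proof: pass to the $A\twistprod A$-action via Lemma~\ref{LemTwistProd}, use the semidirect decomposition \eqref{EqCrossProd} to reduce to the $\iota(A)$-part (your $\phi$, the paper's $\gamma$), read the $m_X$-equivariance as fibre-preservation along the sets $\{(c,\bar c\circ y):c\in A\}$, extract $\beta$ by evaluating at $c=e_A$, and verify the interchange law \eqref{EqCompdotxcirc} automatically in the converse direction from the defining formula for $\cdot_x$. The notation differs (your $\theta_Y,\phi,g_{a,y}$ versus the paper's $\circledcirc,\gamma,\gamma_x$), but every structural step and computation matches.
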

\begin{proof}
By Lemma \ref{LemTwistProd}, there is a one-to-one correspondence between, on the one hand, the $m_Y$ braid-commuting with $m_Y^{\triv}$ and making $m_X$ equivariant, and, on the other hand, the $A\twistprod A$-actions
\[
\circledcirc: (A\twistprod A) \times (A\times X) \rightarrow A\times X,\quad ((a,b),(c,x)) \mapsto (a,b)\circledcirc (c,x)
\]
satisfying 
\begin{equation}\label{EqTrivFirstComp}
(a,e_A)\circledcirc (c,x) = (a\circ c,x),
\end{equation}
\begin{equation}\label{EqTrivEquivar}
m_X((a,b)\circledcirc (c,x)) = a\circ b\circ c \circ x,
\end{equation}
the correspondence being determined by 
\begin{equation}\label{EqDetermineEquivBraid}
(a,b)\circledcirc (c,x) = a\circ (b*(c,x)),
\end{equation}
where we write $a\circ (d,y) = (a\circ d,y)$ as a shorthand. 

By \eqref{EqCrossProd} and \eqref{EqTrivFirstComp}, such an action $\circledcirc$ is completely determined by its restriction to $\iota(A)$. Let us write
\[
\gamma: A\times (A \times X) \rightarrow A\times X,\quad (a,b,x) \mapsto \gamma(a)(b,x) = \iota(a)\circledcirc (b,x)
\]
for the corresponding $\cdot$-action. From \eqref{EqTrivEquivar} and the fact that $\iota(A) = \Ker(m_A)$, it follows that the orbits of $\gamma$ are contained in the sets
\[
A_x = \{(a,\bar{a}\circ x)\mid a\in A,x\in X\} \cong A,\quad (a,y) \mapsto a.
\]
Hence $\gamma$ splits into actions
\begin{equation}\label{EqDefRhox}
\gamma_x: A\times A \rightarrow A,\quad (a,c,x)\mapsto \gamma_x(a)(c) = a\cdot_x c
\end{equation}
of $(A,\cdot)$ on $A$, uniquely determined by 
\begin{equation}\label{EqDefPropRho}
\iota(a)\circledcirc (c,\bar{c}\circ x) = (a\cdot_x c, \overline{a\cdot_x c} \circ x).
\end{equation}
Conversely, from \eqref{EqInterchange} and the observation
\[
j_1(b) \circledcirc (c,\bar{c}\circ x) = (b\circ c,\overline{b\circ c}\circ (b\circ x)),
\] 
it is easily seen that a family of actions $\gamma_x$ as in \eqref{EqDefRhox} can be uniquely extended to an action of $A\twistprod A$ satisfying \eqref{EqDefPropRho} and \eqref{EqTrivFirstComp} if and only if for all $a,b,c \in A$ and $x\in X$ 
\begin{equation}\label{EqCompdotxcirc}
b\circ (a\cdot_x c) = \rho_b(a)\cdot_{b\circ x} (b\circ c)
\end{equation}
or, equivalently, 
\begin{equation}\label{EqCompdotxcircVar}
a \cdot_x (b\circ c) = b\circ (\rho_{\bar{b}}(a)\cdot_{\bar{b}\circ x} c),
\end{equation}
condition \eqref{EqTrivEquivar} then being automatic by the form of \eqref{EqDefPropRho}. 

We have thus shown that actions $m_Y$ as in the statement of the lemma are equivalent to families of $(A,\cdot)$-actions 
\[
\gamma_x:A\times A \rightarrow A,\quad (a,b)\mapsto a\cdot_x b
\]
satisfying the compatibility relation \eqref{EqCompdotxcirc}. By \eqref{EqDefPropRho}, the correspondence is uniquely determined by \eqref{EqDetermineEquiv}. 

Define now
\[
\beta: A\times X \rightarrow A,\quad (a,x)\mapsto \beta_x(a):= a\cdot_x e_A.
\]
Taking $c = e_A$ in \eqref{EqCompdotxcircVar}, we see that
\begin{equation}\label{EqDefPropdotx}
a\cdot_x b = b\circ \beta_{\bar{b}\circ x} (\rho_{\bar{b}}(a)).
\end{equation}
Conversely, if 
\[
\beta: A\times X \rightarrow X,\quad (a,x)\mapsto \beta_x(a)
\] 
is any map, then defining $a \cdot_x b$ by means of \eqref{EqDefPropdotx}, we see that \eqref{EqCompdotxcirc} is satisified since
\begin{eqnarray*}
b\circ (a\cdot_x c) &=& b\circ c\circ \beta_{\bar{c}\circ x}(\rho_{\bar{c}}(a))\\
&=& (b\circ c) \circ \beta_{\overline{b\circ c}\circ (b\circ x)}(\rho_{\overline{b\circ c}}(\rho_b(a))\\
&=& \rho_b(a)\cdot_{b\circ x} (b\circ c).
\end{eqnarray*}
Hence the only condition on $\beta$ is that the $\cdot_x$-product defined by \eqref{EqDefPropdotx} defines an action of $A$ on $A$ for each $x$. This finishes the proof. 
\end{proof}

\begin{Lem}\label{LemBraidCommSol}
Let $(X,m_X)$ be an action for $(A,\circ)$, and let $\beta: A\times X \rightarrow A$ be a map such that \eqref{EqActProprhox} holds with respect to \eqref{EqDefrhox}. Let $m_Y$ be the corresponding action of $(A,\circ)$ on $Y = A\times X$ determined by \eqref{EqDetermineEquiv}. Then $m_Y$ braid-commutes with $m_Y^r$ if and only if for all $a,b,c\in A$
\begin{equation}\label{EqCondCommBraid}
a\cdot_x (b\cdot c)  = a\cdot b\cdot a^{-1} \cdot (a\cdot_x c).
\end{equation}
\end{Lem}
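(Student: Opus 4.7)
The plan is to unfold the braid-commutation $m_Y(\id_A\times m_Y^r) = m_Y^r(\id_A\times m_Y)(r\times\id_Y)$ applied to a generic tuple $(a,b,(c,x))\in A\times A\times Y$ into an explicit identity in $A\times X$, and show that this identity is equivalent to \eqref{EqCondCommBraid} by a direct calculation using the skew brace axioms.

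Since by Lemma \ref{PropCCpi} the map $m_X:Y\to X$ is $(A,\circ)$-equivariant for $m_Y$, and $m_A\circ r = m_A$ by \eqref{EqBraidCom}, both sides of the braid-commutation at $(a,b,(c,x))$ have $X$-component equal to $a\circ b\circ c\circ x$, so the comparison reduces to the $A$-components. Using \eqref{EqDetermineEquiv} together with the identity $(b\rhd c)\circ(b\lhd c) = b\circ c$, the left $A$-component works out to $a\circ\bigl(\bar a\cdot_{b\circ c\circ x}(b\rhd c)\bigr)$. For the right-hand side, applying $m_Y$ first produces $(a\lhd b)*(c,x)$ with $A$-component $p := (a\lhd b)\circ(\overline{a\lhd b}\cdot_{c\circ x}c)$, and then $m_Y^r$ produces first component $(a\rhd b)\rhd p = \lambda_{\lambda_a(b)}(p)$. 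The braid-commutation thus becomes the requirement that
\[
a\circ\bigl(\bar a\cdot_{b\circ c\circ x}(b\rhd c)\bigr) \;=\; \lambda_{\lambda_a(b)}\bigl((a\lhd b)\circ(\overline{a\lhd b}\cdot_{c\circ x}c)\bigr)
\]
hold for all $a,b,c\in A$ and $x\in X$.

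The final step, which must be established in both directions, is the equivalence of this identity with \eqref{EqCondCommBraid}. The key algebraic ingredient is the composition identity $\lambda_{\lambda_a(b)}\lambda_{a\lhd b} = \lambda_{\lambda_a(b)\circ(a\lhd b)} = \lambda_{a\circ b}$, which follows from $\lambda:(A,\circ)\to\Aut(A,\cdot)$ being a group homomorphism together with $\lambda_a(b)\circ(a\lhd b) = a\circ b$. Using this, the relation $g\circ h = g\cdot\lambda_g(h)$, the defining formula \eqref{EqDefrhox} for $\cdot_x$ in terms of $\beta$, and the skew brace distributivity $a\circ(p\cdot q) = (a\circ p)\cdot a^{-1}\cdot(a\circ q)$, both sides of the displayed identity can be rewritten as explicit products involving $\beta$ and the skew brace operations; the two expressions match if and only if $a\cdot_x(b\cdot c) = (a\cdot b\cdot a^{-1})\cdot(a\cdot_x c)$. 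The primary technical obstacle is the bookkeeping required by the interplay between the two group operations: both sides involve $\cdot_x$ at the different base points $c\circ x$ and $b\circ c\circ x$, and repeated application of the $\circ$-distributivity law together with the composition identity above is needed to bring them into a common form. A useful sanity check is the specialization to $c=e_A$, where $b\rhd e_A = e_A$, $b\lhd e_A = b$, and $a\cdot_x e_A = \beta_x(a)$, which collapses the whole identity to a relation determining $a\cdot_x b$ as $(a\cdot b\cdot a^{-1})\cdot\beta_x(a)$ and from which the full \eqref{EqCondCommBraid} is recovered by the $\cdot$-distributivity of conjugation.
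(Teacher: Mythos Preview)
Your proposal is correct and follows essentially the same route as the paper: reduce the braid-commutation to a comparison of first components (the $X$-components agreeing automatically by equivariance of $m_X$ and $m_A\circ r=m_A$), obtain the explicit identity between $a\circ(\bar a\cdot_{b\circ c\circ x}(b\rhd c))$ and $\lambda_{\lambda_a(b)}\bigl((a\lhd b)\circ(\overline{a\lhd b}\cdot_{c\circ x}c)\bigr)$, and then manipulate using the skew brace relations. The paper evaluates at $(a,b,c,\bar c\circ x)$ rather than $(a,b,c,x)$, which is only a relabeling of the $X$-variable, and it carries out the simplification via the basepoint-shift identity $b\circ(a\cdot_x c)=\rho_b(a)\cdot_{b\circ x}(b\circ c)$ (established inside Lemma~\ref{PropCCpi}) rather than by expanding directly in terms of $\beta$; since that identity is an immediate consequence of \eqref{EqDefrhox}, the two computations are interchangeable.
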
 
\begin{proof}
It is easy to see that, under the condition \eqref{EqTrivmXk}, the braid-commutation
\begin{equation}\label{EqBraidCommYmYr}
m_Y^r(\id_A \times m_Y) (r\times \id_Y) = m_Y (\id_A \times m_Y^r) 
\end{equation}
will hold if and only if the first components of these maps are the same. Applying then \eqref{EqBraidCommYmYr} to $(a,b,c,\bar{c}\circ x)$ and taking first components, we see by using \eqref{EqDefIdrhd}  that \eqref{EqBraidCommYmYr} is equivalent with the identity 
\begin{equation}\label{EqDiffId}
(a\rhd b)\rhd((a\lhd b) \circ (\overline{a\lhd b}\cdot_x c)) = a\circ (\bar{a}\cdot_{b\circ x}(b\rhd c)).
\end{equation}
Now the left hand side of \eqref{EqDiffId} simplifies as follows, 
\begin{eqnarray}
\nonumber (a\rhd b)\rhd((a\lhd b) \circ (\overline{a\lhd b}\cdot_x c)) &\underset{\eqref{EqIdLR}}{=}& (a\rhd b)^{-1}\cdot ((a\rhd b)\circ (a\lhd b)\circ (\overline{a \lhd b}\cdot x)) \\ 
&\underset{\eqref{EqDefIdrhd}}{=}& \nonumber (a\rhd b)^{-1}\cdot ((a\circ b) \circ (\overline{a\lhd b} \cdot_x c))\\
& \underset{\eqref{EqIdLR}}{=}& \lambda_a(b)^{-1} \cdot ((a\circ b)\circ (\rho_{\bar{b}}(\bar{a}))\cdot_x c))
\\  \nonumber&\underset{\eqref{EqCompdotxcirc}}{=}& \lambda_a(b)^{-1} \cdot (\rho_{a\circ b}(\rho_{\bar{b}}(\bar{a})) \cdot_{a\circ b \circ x} (a\circ b \circ c)) \\ 
\nonumber &=&   \lambda_a(b)^{-1} \cdot (\rho_{a}(\bar{a}) \cdot_{a\circ b \circ x} (a\circ b \circ c)) \\
\label{EqLHS} &=& \lambda_a(b)^{-1} \cdot (a^{-1} \cdot_{a\circ b\circ x}(a\circ b \circ c)).
\end{eqnarray}
On the other hand, by applying $a\circ$ to $e_A = b^{-1}\cdot b$ we have
\[
 a\cdot (a\circ b)^{-1}\cdot a,
\]
and the right hand side of \eqref{EqDiffId} can then be rewritten as follows, 
\begin{eqnarray} 
\nonumber a\circ (\bar{a} \cdot_{b\circ x} (b\rhd c)) 
&\underset{\eqref{EqCompdotxcirc}}{=}& \rho_a(\bar{a}) \cdot_{a\circ b \circ x}(a\circ \lambda_b(c))\\
\nonumber &=& a^{-1} \cdot_{a\circ b \circ x}(a\circ \lambda_b(c)) \\
\nonumber &=& a^{-1}\cdot_{a\circ b \circ x}(a\circ (b^{-1}\cdot (b\circ c))) \\
\nonumber &=& a^{-1} \cdot_{a\circ b\circ x}((a\circ b^{-1})\cdot a^{-1}\cdot (a\circ b\circ c))\\
\nonumber &=& a^{-1}\cdot_{a\circ b\circ x}(a\cdot (a\circ b)^{-1}\cdot (a\circ b\circ c))\\
\label{EqRHS}&=& a^{-1}\cdot_{a\circ b\circ x}(a\cdot \lambda_a(b)^{-1}\cdot a^{-1} \cdot (a\circ b\circ c)).
\end{eqnarray}
Replacing now subsequently $a\circ b \circ x$ by $x$, $c$ by $\bar{b}\circ \bar{a}\circ c$, $a\cdot \lambda_a(b)^{-1}\cdot a^{-1}$ by $b$ and $a$ by $a^{-1}$, we obtain that the equality of \eqref{EqLHS} and \eqref{EqRHS} becomes \eqref{EqCondCommBraid}.
\end{proof}

\begin{Theorem}
Let $(A,\circ,\cdot)$ be a skew brace with associated braiding operator $r$, and let $(X,\circ)$ be an action of $(A,\circ)$. Then there is a one-to-one correspondence between actions $(X,\circ,\pi)$ of $(A,\circ,\cdot)$ and braided actions $(X,\circ,k)$ of $(A,\circ,r)$, determined by
\[
k(a,x) = (\pi_{a\circ x}(a),\overline{\pi_{a\circ x}(a)}\circ a \circ x).
\]
\end{Theorem}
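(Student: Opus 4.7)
The plan is to chain together the three reduction lemmas proved in Sections 6 and 7, identifying a braided action with the data of a single map $\beta: A\times X\to A$ subject to two conditions, and then translating this data into a skew brace action $\pi$.

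First, by the lemma at the end of Section 6, a braided action $(X,m_X,k)$ of $(A,\circ,r)$ is equivalent to giving, in addition to $m_X$, a second $(A,\circ)$-action $m_Y$ on $Y = A\times X$ which braid-commutes with both $m_Y^{\triv}$ and $m_Y^r$ and makes $m_X: Y\to X$ equivariant, with $k(a,x) = m_Y(a)(e_A,x)$. By Lemma \ref{PropCCpi}, such $m_Y$ (braid-commuting with $m_Y^{\triv}$ and making $m_X$ equivariant) correspond to maps $\beta: A\times X\to A$ for which $\gamma_x$, built from $\beta$ via \eqref{EqDefrhox}, is a left $(A,\cdot)$-action on $A$ for every $x\in X$. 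Finally, Lemma \ref{LemBraidCommSol} translates the remaining braid-commutation with $m_Y^r$ into identity \eqref{EqCondCommBraid}.

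It then suffices to exhibit a bijection between such $\beta$ and skew brace actions $\pi$, via the mutually inverse formulas
\[
\pi_y(a) := a\circ \beta_{\bar{a}\circ y}(\bar{a}), \qquad \beta_x(b) := b\circ \pi_{\bar{b}\circ x}(\bar{b}),
\]
whose mutual inverseness is a one-line check. Under this substitution the formula $k(a,x) = m_Y(a)(e_A, x)$, which by \eqref{EqDetermineEquiv} equals $(a\circ \beta_x(\bar{a}), -)$, simplifies directly to $(a\circ \bar{a}\circ \pi_{a\circ x}(a), -) = (\pi_{a\circ x}(a), -)$; the second coordinate is then pinned down by $m_X\circ k = m_X$, matching the formula in the statement.

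The main work is checking that the axioms on $\beta$ (namely associativity of $\gamma_x$ and \eqref{EqCondCommBraid}) correspond to the axioms on $\pi$ from Proposition \ref{PropCorrEndo}. Taking $c=e_A$ in \eqref{EqCondCommBraid} yields the closed form $a\cdot_x b = (a\cdot b\cdot a^{-1})\cdot \beta_x(a)$. Applying the action property of $\gamma_x$ to $(a\cdot b)\cdot_x e_A = a\cdot_x(b\cdot_x e_A)$ together with this formula produces
\[
\beta_x(a\cdot b) = a\cdot \beta_x(b)\cdot a^{-1}\cdot \beta_x(a),
\]
which, after substituting $\beta_x(b) = b\circ \pi_{\bar{b}\circ x}(\bar{b})$ and applying skew brace distributivity, is equivalent to multiplicativity $\pi_x(a\cdot b) = \pi_x(a)\cdot \pi_x(b)$. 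The full identity \eqref{EqCondCommBraid}, combined with associativity of $\gamma_x$, then corresponds exactly to the cocycle relation \eqref{EqComppicirc}, completing the translation.

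The main obstacle is this last step: the conditions on $\beta$ and on $\pi$ sit most naturally with different products, so careful use of the distributivity $a\circ(b\cdot c) = (a\circ b)\cdot a^{-1}\cdot (a\circ c)$ and of the identities \eqref{EqIdLR} (relating $\rhd$, $\lhd$, $\lambda$, $\rho$) is needed to pass between them in both directions. Once this algebraic bookkeeping is done, all other steps of the proof are formal applications of the cited lemmas.
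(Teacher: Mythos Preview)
Your overall strategy is exactly the paper's: reduce via the Section~6 lemma, Lemma~\ref{PropCCpi}, and Lemma~\ref{LemBraidCommSol} to a map $\beta$ satisfying two conditions, then translate these into the $\pi$-axioms of Proposition~\ref{PropCorrEndo}. The one substantive difference is your change of variables. You take the $\circ$-based formula $\pi_y(a) = a\circ\beta_{\bar a\circ y}(\bar a)$, whereas the paper takes the $\cdot$-based formula $\pi_x(a) = \beta_x(a)^{-1}\cdot a$, equivalently $\beta_x(a) = a\cdot\pi_x(a)^{-1}$.

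This choice matters for exactly the step you flag as ``the main obstacle''. With the paper's substitution, the identity $\beta_x(a\cdot b) = a\cdot\beta_x(b)\cdot a^{-1}\cdot\beta_x(a)$ becomes $\pi_x(a\cdot b)=\pi_x(a)\cdot\pi_x(b)$ by a one-line cancellation, and the second condition (obtained by equating the closed form $a\cdot_x b = a\cdot b\cdot a^{-1}\cdot\beta_x(a)$ with the defining formula \eqref{EqDefrhox}) reduces to \eqref{EqComppicircOther} after multiplying by $b^{-1}$ and using $\lambda_b\rho_{\bar b}(a)=b^{-1}\cdot a\cdot b$. With your $\circ$-based substitution these verifications are genuinely harder, and you do not carry them out; the ``careful bookkeeping'' you allude to is precisely what the paper's choice of variables avoids. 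Conversely, your substitution makes the final $k$-formula immediate, while the paper needs a short extra computation at the end to recover $k(a,x)=(\pi_{a\circ x}(a),-)$. So the two routes are equivalent, but the paper's substitution front-loads the easy part and trivializes the hard part; if you keep your formula you should actually perform the axiom translation rather than assert it.
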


\begin{proof}
By Lemma \ref{PropCCpi} and Lemma \ref{LemBraidCommSol}, braided actions of $(X,\circ,k)$ are in one-to-one correspondence with a collection of maps 
\[
\beta_x: A \rightarrow A
\]
such that $\eqref{EqDefrhox}$ defines an action satisfying \eqref{EqCondCommBraid}. In particular, applying \eqref{EqCondCommBraid} with $c = e_A$ leads to 
\begin{equation}\label{EqDefxProdDot}
a\cdot_x b = a\cdot b \cdot a^{-1} \cdot \beta_x(a).
\end{equation}
Since $\cdot_x$ is an action, the identity $a\cdot_x(b\cdot_x c) = (a\cdot b)\cdot_xc$ leads to
\begin{equation}\label{EqIdProdBet}
\beta_x(a\cdot b) = a\cdot \beta_x(b)\cdot a^{-1}\cdot \beta_x(a).
\end{equation}
On the other hand, equating \eqref{EqDefxProdDot} with \eqref{EqDefrhox}, we see that $\beta_x$ satisfies the identity
\begin{equation}\label{EqOtherEq}
b\circ \beta_{\bar{b}\circ x}(\rho_{\bar{b}}(a)) = a\cdot b\cdot a^{-1} \cdot \beta_x(a).
\end{equation}

Conversely, assume that $\beta_x: A \rightarrow A$ are a collection of maps satisfying \eqref{EqIdProdBet} and  \eqref{EqOtherEq}. The latter identity tells us that $a\cdot_x b$, as defined by $\eqref{EqDefrhox}$, is given by the formula \eqref{EqDefxProdDot}, and the former identity clearly implies that $\cdot_x$ is then an action. The full identity  \eqref{EqCondCommBraid} then follows immediately from the expression \eqref{EqDefxProdDot} of the $\cdot_x$-action. 

Put now 
\begin{equation}\label{EqDefPiforBet}
\pi_x(a) := \beta_x(a)^{-1}\cdot a,
\end{equation}
so that 
\[
\beta_x(a) = a\cdot \pi_x(a)^{-1}.
\]
It is then sufficient to verify that $\pi_x$ is an endomorphism satisfying \eqref{EqComppicirc} if and only if $\beta_x$ satisfies \eqref{EqIdProdBet} and \eqref{EqOtherEq}. 
Clearly \eqref{EqIdProdBet} is equivalent to $\pi_x$ being an endomorphism. On the other hand, upon multipliying  \eqref{EqOtherEq} to the left with $b^{-1}\cdot$ and using that $\pi_x$ is an endomorphism, we obtain for $\pi_x$ the further necessary and sufficient condition
\[
\lambda_b(\rho_{\bar{b}}(a) \cdot \pi_{\bar{b}\circ x}(\rho_{\bar{b}}(a^{-1}))) = b^{-1}\cdot a\cdot b \cdot \pi_x(a^{-1}).
\]
Using $\lambda_b(\rho_{\bar{b}}(a)) = b^{-1}\cdot a \cdot b$ and replacing $a$ by $a^{-1}$ and $x$ by $b\circ x$, this simplifies to 
\[
\lambda_b(\pi_{x}(\gamma_{\bar{b}}(a))) = \pi_{b\circ x}(a),
\]
which is the identity \eqref{EqComppicircOther} (with $a$ and $b$ interchanged), and hence equivalent to \eqref{EqComppicirc}.

We have thus shown the one-to-one correpsondence between braided actions $(X,m_X,k)$ and skew brace actions $(X,m_X,\pi)$. The concrete formula for $k$ now follows from observing that
\[
k(a,x) = a*(e_A,x) = (b,\bar{b}\circ a \circ x),
\]
where 
\[
b = a\circ (\bar{a}\cdot_xe_A) \underset{\eqref{EqCompdotxcirc}}{=} \rho_a(\bar{a})\cdot_{a\circ x} a = a^{-1}\cdot_{a\circ x}a \underset{\eqref{EqDefxProdDot}}{=} a \cdot \beta_{a\circ x}(a^{-1}) = \pi_{a\circ x}(a). 
\]
\end{proof}

\begin{Exa}
If $k = r^2$, the associated family of $\pi_b: A \rightarrow A$ is given by the standard skew brace action from Example \ref{ExaStandard},
\[
\pi_b(a) = \alpha_b(a) =  b^{-1}\cdot a \cdot b.
\]
Indeed, we have by construction that $\pi_b(a)$ is the first leg of $k(a,\bar{a}\circ b)$. Now using \eqref{EqIdLR} and \eqref{EqDefIdrhd}, we have
\[
r(a,\bar{a}\circ b) = (\lambda_a(\bar{a}\circ b),\overline{\lambda_a(\bar{a}\circ b)}\circ b) = (a^{-1}\cdot b,\overline{a^{-1}\cdot b} \circ b).
\]
Hence the first leg $\pi_b(a)$ of $k(a,\bar{a}\circ b) = r^2(a,\bar{a}\circ b)$ is given by 
\[
\pi_b(a) = \lambda_{a^{-1}\cdot b}(\overline{a^{-1}\cdot b}\circ b) = (a^{-1}\cdot b)^{-1} \cdot b = b^{-1}\cdot a \cdot b.
\]
\end{Exa}


\end{document}